\documentclass[11pt,a4paper]{article}
\usepackage[latin1]{inputenc}
\usepackage{amsmath,amsthm,amsfonts,amssymb,amscd,latexsym}
\usepackage{enumerate}
\usepackage{epsfig,psfrag}
\usepackage{graphics,graphicx,bezier,float,color,hyperref}
\usepackage{url}
\usepackage{multienum}
\usepackage[table]{xcolor}
\usepackage{multicol,multirow}
\usepackage{fancyvrb}
\usepackage{tcolorbox}
\usepackage{parskip}
\usepackage[toc,page]{appendix}
\usepackage{booktabs}
\usepackage[top=2.7cm, bottom=2.7cm, left=1.5cm, right=1.5cm]{geometry}
\usepackage[none]{hyphenat}
\usepackage{blkarray}
\newtheorem{theorem}{Theorem}[section]
\newtheorem{lemma}[theorem]{Lemma}
\newtheorem{proposition}[theorem]{Proposition}
\newtheorem{cor}[theorem]{Corollary}
\newtheorem{remark}[theorem]{Remark}
\newtheorem{definition}[theorem]{Definition}

\numberwithin{equation}{section}
\numberwithin{table}{section}
\numberwithin{figure}{section}

\title{Cullen and Woodall numbers in Padovan and Perrin sequences}
\author{Herbert Batte$^{1,*}$  \and Eric F. Bravo$^{2}$ \and Florian Luca$^{1,3}$ }
\date{}

\begin{document}
\maketitle
\abstract{ Let $\{P_n\}_{n\ge 0}$ and $\{R_n\}_{n\ge 0}$ denote the Padovan and Perrin 	sequences, both satisfying the recurrence $U_{n+3} = U_{n+1} + U_n$, 
but with initial values $P_0 = P_1 = P_2 = 1$ and $R_0 = 3$, 
$R_1 = 0$, $R_2 = 2$, respectively. A \textit{Cullen number} is a positive integer of the form $m\cdot 2^m + 1$ for some integer $m \ge 1$, while a \textit{Woodall number} is a positive integer of the form $m\cdot 2^m - 1$ for some integer $m \ge 1$. In this paper, we determine all Woodall numbers in the Padovan sequence and all Cullen numbers in the Perrin sequence. Specifically, we prove that $1$ and $7$ are the only Woodall numbers in the Padovan sequence, and that $3$ is the only Cullen number in the Perrin sequence.} 

\medskip

{\bf Keywords and phrases}: Cullen number; Padovan number; Perrin number;
Woodall number; $2$-adic valuation; Linear forms in logarithms.

\medskip

{\bf 2020 Mathematics Subject Classification}: 11B39, 11D61, 11D45, 11J86.

\medskip

\thanks{$ ^{*} $ corresponding author}

\section{Introduction}\label{intro}

In 1905, Reverend James Cullen \cite{JC} published the first study on the primality of numbers of the form $m \cdot 2^{m}+1$. He realized that, apart from $1\cdot 2^{1}+1=3$ and with the possible exception of $53\cdot 2^{53}+1$, the first hundred of these numbers are composite. A year later, Cunningham \cite{AC} published the finding that $5591$ divides $53\cdot 2^{53}+1$. The first non-trivial prime number of this form was discovered by Robinson \cite{RR} nearly fifty year later, who confirmed Cunningham's suspicion that $141\cdot 2^{141}+1$ was prime. These numbers are now called \textit{Cullen numbers}. We denote them by $C_m$ for every positive integer $m$. A Cullen number that is prime is called a Cullen prime.

In 1917, Cunningham and Woodall \cite{CW}, inspired by Cullen's work, published the first paper on the primality of numbers of the form $m\cdot 2^{m}-1$. They proved that there were only three prime numbers of this form in the range $1\le m\le 29$, namely $2\cdot 2^{2}-1$, $3\cdot 2^{3}-1$, and $6\cdot 2^{6}-1$. These numbers are now called \textit{Woodall numbers}, and primes of this form are called Woodall primes. We denote them by $W_m$ for all $m\in \mathbb{Z}^{+}$.

In the mid-twentieth century, mathematicians wondered how often Cullen and Woodall primes appeared. In 1976, Hooley \cite{CH} showed that the natural density of positive integers $m\le x$ for which $C_m$ is prime is of the order $o(x)$ when $x\rightarrow \infty$. In other words, almost all of Cullen's numbers are composite. Hooley's proof was reworked by Hiromi Suyama to show that it works for any sequence of numbers of the form $m\cdot 2^{m+a}+b$, where $a$ and $b$ are integers, and in particular also for Woodall numbers. To date, sixteen Culllen primes and thirty-four Woodall primes are known. The largest are $C_{6679881}$ and $W_{17016602}$. They were discovered through distributed computing and citizen collaboration via the international PrimeGrid project.

In 2003, Luca and St\u{a}nic\u{a} \cite{12} began the search for Cullen and Woodall numbers in linear recurrence sequences. They proved that there are only a finite number of them in any binary recurrence sequence that satisfies certain conditions. They applied their result to the Fibonacci and Pell sequences. They found that $1$ and $3$ are the only Cullen numbers that are also Fibonacci numbers, and $1$ is the only Woodall number that is also a Fibonacci number; while the only Cullen number and the only Woodall number present in the Pell sequence is $1$. Recently, Alahmadi and Luca \cite{1} proved that there is no Cullen number that is also a repunit number.

Regarding the presence of Cullen numbers and Woodall numbers in recurrent linear sequences of order greater than $2$, we have the recent results of B\'erczes, Pink, and Young \cite{24}, and Bravo and Irmak \cite{BrIrmak}. B\'erczes et al. \cite{24} considered the $k$-generalized Fibonacci sequence, which is a linear recurrent sequence for each fixed order $k\ge 2$. For $k=2$ and $k=3$, this generalization gives the Fibonacci sequence and the Tribonacci sequence, respectively. They proved that $1$ is the unique Cullen number that is also a $k$-generalized Fibonacci number for all $k>2$. Furthermore, they showed that $1$ is a Woodall number that is present in any $k$-generalized Fibonacci sequence for all $k>2$, while $3$ is a Woodall number that is only present in the Tribonacci sequence.

For their part, Bravo and Irmak \cite{BrIrmak} considered a pair of linear recurrent ternary sequences: the Padovan and Perrin sequences. The \textit{Padovan numbers} $\{P_{n}\}_{n\in \mathbb{Z}}$ are defined by the Fibonacci-like recurrence relation
$$
P_{n+3}=P_{n+1}+P_{n}
$$
with initial values $P_{0}=P_{1}=P_{2}=1$. The \textit{Perrin numbers} $\{R_{n}\}_{n\in \mathbb{Z}}$ are defined by the same recurrence relation as the
Padovan numbers but with starting values $R_{0}=3$, $R_{1}=0$, and $R_{2}=2$.

Bravo and Irmak \cite{BrIrmak} determined that $3$, $9$, and $65$ are all the Cullen numbers appearing in the Padovan sequence, and that $7$ is the only Woodall number appearing in the Perrin sequence. Their approach combined a precise analysis of the $2$-adic valuations of 
$\{P_n - 1\}_{n\ge 0}$ and $\{R_n + 1\}_{n \ge 0}$ to obtain absolute bounds, followed by a computational verification.

It is natural then to ask for the complementary results. What are all the Woodall numbers in the Padovan sequence, and what are all the Cullen numbers in the Perrin sequence? In this paper, we answer both questions completely. Our approach parallels that of 
\cite{BrIrmak}, but requires a new and careful analysis of the $2$-adic valuations of $\{P_n + 1\}_{n\ge 0}$ and $\{R_n - 1\}_{n\ge 0}$, which 
are the objects relevant to Woodall numbers in Padovan and Cullen numbers in Perrin, respectively. We prove the following results.

\subsection{Main Results}

\begin{theorem}\label{thm:1}
	The only Woodall numbers in the Padovan sequence are $1$ and $7$.
\end{theorem}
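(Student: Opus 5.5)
We must solve $P_n = m\cdot 2^m - 1$, i.e.\ $P_n + 1 = m\cdot 2^m$, in integers $n\ge 0$, $m\ge 1$. The plan mirrors \cite{BrIrmak}: bound $m$ from above by $\nu_2(P_n+1)$, which is small in terms of $n$, and bound $n$ in terms of $m$ by the growth rate; together these bound $n$ absolutely. We then reduce the bound and finish by computer.

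\textbf{Step 1: the $2$-adic valuation of $P_n+1$.} We first determine $\nu_2(P_n+1)$ explicitly. The sequence $P_n \bmod 2^k$ is purely periodic. Modulo $2$ it is $1,1,1,0,0,1,0,1,1,1,\dots$ with period $7$, so $P_n+1$ is even exactly for $n$ in certain residue classes mod $7$. For those classes we pass to the subsequences $n = 7t + r$, and then to $n$ in residue classes modulo $7\cdot 2^{j}$, and lift. The analogous Padovan analysis in \cite{BrIrmak} suggests the shape of the result. Aside from finitely many exceptional classes (where $P_n+1$ may vanish $2$-adically in a ``Zsigmondy-free'' way), the valuation should be of the form $\nu_2(P_n+1) = \nu_2(n - n_0) + c$ for some fixed $n_0$ and constant $c$ depending on the residue class, or bounded by a constant. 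We prove this by induction on $k$, using the identity for $P_{n+7\cdot 2^k}$ in terms of $P_n$. Concretely, we write $P_{n+N}$ via the matrix power $M^N$ of the companion matrix and show $M^{7\cdot 2^k}\equiv I + 2^{k+c}A \pmod{2^{k+c+1}}$ with $A$ invertible on the relevant vectors. The upshot is a bound
\[
m \le \nu_2(m 2^m) = \nu_2(P_n+1) \le \log n/\log 2 + c_0 .
\]
We expect this step to be the main obstacle. Finding the right moduli and handling classes where the valuation might be unbounded (fixed points of the $2$-adic interpolation of $P_n+1$) requires care. If some class has a $2$-adic zero $n_0\in\mathbb{Z}_2$, we get the $\nu_2(n-n_0)$ form, which is still $O(\log n)$ because $n\neq n_0$.

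\textbf{Step 2: comparing sizes.} Since $\alpha^{n-2}\le P_n\le \alpha^{n-1}$ with $\alpha\approx1.3247$ the plastic number, we have $m2^m \approx \alpha^n$, so $m \gg n/\log n$. Combined with $m \le \log_2 n + c_0$, this gives $n/\log n \ll \log n$, hence an explicit small bound, something like $n < 200$. No linear forms in logarithms are needed here, because the valuation bound is logarithmic in $n$.

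\textbf{Step 3: computation.} For $n$ below the bound (or $m$ below roughly $\log_2 n + c_0$, i.e.\ $m\le 10$ or so), we list the values of $P_n+1$ and check which equal $m2^m$. We find $P_n+1 = 2 = 1\cdot 2^1$ (so $P_n=1$) and $P_n+1 = 8 = 2\cdot 2^2$ (so $P_n = 7$, $n=8$), and nothing else. This proves Theorem~\ref{thm:1}.
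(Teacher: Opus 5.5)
\textbf{The gap is in Step 2, not Step 1.} Your plan fails at the claim that, in a residue class carrying a $2$-adic zero $n_0\in\mathbb{Z}_2$, the quantity $\nu_2(n-n_0)$ is $O(\log n)$ ``because $n\neq n_0$''. The condition $n\neq n_0$ only makes $\nu_2(n-n_0)$ finite for each $n$; it gives no bound in terms of $n$.

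When $n_0$ is an integer $a$, you do get $\nu_2(n-a)\le \log(n-a)/\log 2$. Here $a=-6$ and $a=-9$ are the only integers with $P_a=-1$. These zeros account for the terms $\nu_2(n+6)+4$ and $\nu_2(n+9)+1$, and hence for the bound $\nu_2(P_n+1)\le \log(n+6)/\log 2+4$ off the exceptional class. That part of your outline matches the paper.

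However, $\{P_n+1\}$ also has a non-integer (per the paper, non-rational) $2$-adic zero, in the class $n\equiv 50\pmod{112}$. For such an $n_0$, an integer $0\le n<2^k$ with $\nu_2(n-n_0)\ge K$ exists exactly when the $2$-adic digits of $n_0$ in positions $k,\dots,K-1$ all vanish. So a bound $\nu_2(n-n_0)\le \log_2 n+c_0$ is a statement about runs of zeros in the digit expansion of $n_0$, a $2$-adic approximation property of a number about which nothing is known. In this class your inequality $m\le \log_2 n+c_0$ is unproved, Step 2 yields no absolute bound on $n$, and the assertion that no linear forms in logarithms are needed is precisely where the argument breaks.

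\textbf{How the paper closes the gap.} It uses two ingredients, in this order.
\begin{enumerate}
\item It exploits the equation archimedeanly. From $\bigl|2^{-m}(m/c_\alpha)^{-1}\alpha^{n}-1\bigr|<0.3\cdot 2^{-m}$ it gets $|m\log 2+\log(m/c_\alpha)-n\log\alpha|<2^{-m}$. Matveev's theorem for three logarithms then gives $m<5.6\times 10^{14}$, hence $n<2.3\times 10^{15}$; nonvanishing of the form is checked by the conjugation $\alpha\mapsto\beta$.
\item Only with this finite range in hand can the exceptional class be treated. A computation following the $2$-adic digits of $n_0$ shows $\nu_2(P_n+1)\le 56$ for every $n\equiv 50\pmod{112}$ with $n<2.3\times 10^{15}$. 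This gives $m\le 56$, then $n\le 153$, and a finite search finishes the proof.
\end{enumerate}
Your proposal needs both ingredients, or some other effective substitute for the first, to be complete.
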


\begin{cor}
The only Woodall prime in the Padovan sequence is $7$.
\end{cor}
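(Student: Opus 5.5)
We must solve $P_n = m\cdot 2^m - 1$, i.e. $P_n + 1 = m\cdot 2^m$ with $m\ge 1$. The plan follows the approach of Bravo and Irmak: bound $m$ above by the $2$-adic valuation of $P_n+1$, bound $n$ via linear forms in logarithms, and finish with reduction and a computer search. Write $\alpha$ for the real root of $x^3-x-1$ (the plastic number), so that $\alpha^{n-2}\le P_n\le \alpha^{n-1}$ for $n\ge 1$ (checked by induction), and the Binet formula reads $P_n = a\alpha^n + b\beta^n + \bar b\bar\beta^n$ with $|\beta|=\alpha^{-1/2}<1$. The first step is to handle small $n$, say $n\le 200$, directly by computing $P_n+1$ and testing whether it has the form $m2^m$. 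This produces $P_0=P_1=P_2=1=1\cdot 2^1-1$ and $P_6=7=2\cdot 2^2-1$.

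The key step, and the main obstacle, is a precise description of $\nu_2(P_n+1)$. I would compute $P_n+1 \bmod 2^k$ for increasing $k$. The Padovan sequence mod $2^k$ is periodic with period $7\cdot 2^{k-1}$ for small $k$, since the period mod $2$ is $7$. From this I would identify the residue classes of $n$ for which $P_n\equiv -1 \pmod{2^k}$. I then expect to prove a formula of the shape $\nu_2(P_n+1)=\nu_2(n-n_0)+c$ on each class $n\equiv n_0 \pmod{7}$ (or modulo $14$ or $28$) where $P_n+1$ is even, possibly with finitely many exceptional classes where the valuation is bounded.

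The natural tool is a lifting argument. Write $n=n_0+2^s t$ and use the doubling identities for Padovan numbers, which express $P_{2j}$ and related terms through $P_j$ and its neighbours. Equivalently, work $2$-adically with the matrix $A=\begin{pmatrix}0&1&0\\0&0&1\\1&1&0\end{pmatrix}$: $A^7\equiv I\pmod 2$, hence $A^{7\cdot 2^{s}}\equiv I \pmod{2^{s+1}}$ with exact valuation, and this gives $P_{n+7\cdot2^s}-P_n \equiv 2^{s+1}u \pmod{2^{s+2}}$ with $u$ odd on the relevant classes. If a clean formula fails in some class, I would fall back on the weaker but sufficient bound $\nu_2(P_n+1)\le \nu_2(n-n_0)+C \le \log n/\log 2 + C$ on each class. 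Either way, we get $m\le \nu_2(P_n+1)\le c_1\log n$.

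With $m\ll \log n$, the equation gives
\[
\bigl|a\alpha^n - m2^m\bigr| = |1 + b\beta^n + \bar b\bar\beta^n| < 2.
\]
Dividing by $m2^m$ yields
\[
\Lambda = a\alpha^n m^{-1}2^{-m}-1, \qquad |\Lambda|< 3/(m2^m)\ll \alpha^{-n}\cdot\mathrm{poly}(n),
\]
using that $m2^m\approx \alpha^n$. In fact, $m2^m\approx\alpha^n$ together with $m\ll\log n$ is already contradictory for large $n$: $2^m m\le n^{c_1}\log n$, whereas $P_n\ge\alpha^{n-2}$ grows exponentially. So $\alpha^{n-2}\le n^{c_1\log 2+1}$, which bounds $n$ by an explicit small constant without needing Matveev's theorem at all. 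The only delicacy is making $c_1$ and the additive constant explicit. The resulting bound on $n$ should be a few hundred, and the computation from the first step covers that range. This leaves exactly $1$ and $7$, and the corollary follows since $1$ is not prime and $7$ is.
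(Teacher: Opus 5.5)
There is a genuine gap, in the sentence ``Either way, we get $m\le\nu_2(P_n+1)\le c_1\log n$.'' Your fallback $\nu_2(P_n+1)\le \nu_2(n-n_0)+C$ assumes that in every residue class where $P_n+1$ can be highly divisible by $2$, the $2$-adic zero of $n\mapsto P_n+1$ is an integer $n_0$. That holds for the classes governed by the genuine zeros $n_0=-6$ and $n_0=-9$; these give the paper's terms $\nu_2(n+6)+4$ and $\nu_2(n+9)+1$.

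It fails for $n\equiv 50\pmod{112}$. There $P_n+1$ has a $2$-adic zero $z\in\mathbb{Z}_2$ that is not an integer, since the only integers with $P_n=-1$ are $-6$ and $-9$, and both lie in other classes. On this class $\nu_2(P_n+1)$ is unbounded, so it is not one of your ``exceptional classes where the valuation is bounded.'' Its size depends on how closely integers $n$ approximate $z$ $2$-adically; for instance $\nu_2(P_{919832845308882}+1)=56$.

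Proving $\nu_2(n-z)=O(\log n)$ for this particular $z$ is a Diophantine approximation statement you have no tool for. The prime $2$ is inert in $\mathbb{Q}(\alpha)$, so all four terms of $c_\alpha\alpha^n+c_\beta\beta^n+c_\gamma\gamma^n+1$ are $2$-adic units, and $2$-adic linear forms in logarithms give nothing. So the claim that Matveev's theorem is unnecessary collapses. Your growth-versus-valuation argument only settles the non-exceptional classes, where it coincides with the paper's case (i) and gives $m\le 9$.

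What is missing is an absolute bound obtained first on the archimedean side. Apply Matveev's theorem to $m\log 2+\log(m/c_\alpha)-n\log\alpha$, which is essentially the $\Lambda$ you wrote down and then set aside. This gives $m<5.6\times 10^{14}$, hence $n<2.3\times 10^{15}$. Only with this bound does the exceptional class become a finite computation: following the $2$-adic digits of $z$ via the doubling identities shows $\nu_2(P_n+1)\le 56$ there. Then $m\le 56$ and $n\le 153$, and the direct search finishes the proof.

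Two minor slips, neither of which matters: $7=P_8$, not $P_6$; and your upper bound $P_n\le\alpha^{n-1}$ fails at $n=3$, though you never use it.
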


\begin{theorem}\label{thm:2}
	The only Cullen number in the Perrin sequence is $3$.
\end{theorem}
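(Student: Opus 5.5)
We have to solve $R_n = m\cdot 2^m+1$ with $m\ge 1$, that is, $R_n-1=m\cdot 2^m$. The plan follows the strategy of \cite{BrIrmak}. First a $2$-adic bound gives $m\ll\log n$. Then linear forms in logarithms bound $n$. Finally a reduction and a computer search settle the remaining small range.

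\textbf{Step 1: the $2$-adic valuation of $R_n-1$.} Since $m\ge 1$ forces $R_n\ge 3$, small $n$ are checked by hand. The identity $R_n-1=m2^m$ gives
$$m\le \nu_2(m2^m)=\nu_2(R_n-1).$$
We first note that $R_n \bmod 2^k$ is periodic; modulo $2$ the period is $7$. This lets us locate the residue classes of $n$ for which $R_n$ is odd, and only these are relevant. On those classes we will prove a bound of the form $\nu_2(R_n-1)\le c_1+c_2\,\nu_2(n-n_0)$, or more generally $\nu_2(R_n-1)\ll\log n$. The tool is the Binet-type formula $R_n=\alpha^n+\beta^n+\gamma^n$. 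We work in the $2$-adic completion: since $x^3-x-1$ is irreducible modulo $2$, $\mathbb{Q}_2(\alpha)$ is the unramified cubic extension. There we write $R_{n_0+tN}=\mathrm{Tr}(\alpha^{n_0}\alpha^{tN})$, with $N$ a period such that $\alpha^N\equiv 1 \pmod 2$. We then expand $\alpha^{tN}=\exp(t\log\alpha^N)$ as a $2$-adic power series in $t$, so the valuation of $R_n-1$ is controlled by that of $t$ (equivalently of $n-n_0$). Alternatively, and more elementarily, one proves by induction on $k$ identities of the shape $R_{n+2^k N}-R_n\equiv 2^{k+c}(\cdots)\pmod{2^{k+c+1}}$, as in \cite{BrIrmak}. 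Either route yields $m\le \nu_2(R_n-1)\le C\log n$ for an explicit $C$. I expect this step to be the main obstacle: it is precisely the new $2$-adic analysis of $\{R_n-1\}$, it requires finding the exceptional residue classes, and the Perrin case involves all three roots at once.

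\textbf{Step 2: linear forms in logarithms.} Using $|\beta^n+\gamma^n|<1$, we get
$$\left|\alpha^n-m2^m\right|<2,\qquad\text{hence}\qquad \left|n\log\alpha-m\log 2-\log m\right|<3\alpha^{-n}.$$
This is a linear form in the three logarithms $\log\alpha$, $\log 2$, $\log m$. Matveev's theorem bounds it below by $\exp(-c\log m\log n)$. Since $m\ll\log n$, this gives $n\ll \log\log n\,\log n$, hence an absolute bound $n<10^{20}$ or so, and correspondingly $m<C\log n$, which is at most a few hundred.

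\textbf{Step 3: reduction.} For each fixed $m$ in this small range, the inequality becomes an inhomogeneous linear form in two logarithms with constant $\log(m2^m)$. We apply the Baker--Davenport (Dujella--Peth\H{o}) reduction to bring $n$ down to a few hundred. A direct computation of $R_n-1$ over that range then shows that the only solution is $R_1=0$? No: we need $R_n=3$, which holds for $n=0$ and $n=3$. Both give $m=1$, that is, $3=1\cdot 2+1$, so $3$ is the only Cullen number in the Perrin sequence, as stated in Theorem~\ref{thm:2}.
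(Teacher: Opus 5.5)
Your Step 1 claims more than your method can deliver, and the rest of your chain depends on it. Among the classes with $R_n$ odd there are two, $n\equiv 5,10\pmod{14}$, on which $R_n-1$ has a $2$-adic zero $n^*\in\mathbb{Z}_2$ that is not a rational integer. The only integer with $R_n=1$ is $n=-2$, and it accounts only for the class $n\equiv 12\pmod{14}$, where $\nu_2(R_n-1)=\nu_2(n+2)+1$. Your power-series argument does control $\nu_2(R_n-1)$ by $\nu_2(n-n^*)$ near such a zero. But when $n^*\notin\mathbb{Z}$ this does not turn into $\nu_2(R_n-1)\le C\log n$. The quantity $\nu_2(n-n^*)$ is large exactly when the binary expansion of $n^*$ has a long run of zeros just past its first $\log_2 n$ digits. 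No standard effective tool rules this out: $2$-adic linear forms in logarithms handle differences of two $S$-units, while $\alpha^n+\beta^n+\gamma^n-1$ has four terms. The elementary congruences $R_{7\cdot 2^tk+j}\equiv R_j+k2^{t+1}\pmod{2^{t+2}}$ fail for the same reason: they fix the valuation only if the class contains an integer $j$ with $R_j=1$. So ``$m\le\nu_2(R_n-1)\le C\log n$ for an explicit $C$'' is not available on these two classes. Without it, your Step 3 would have to treat every $m$ up to roughly $10^{14}$.

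The missing observation is that bounding $n$ needs no $2$-adic input at all. Since $\alpha^n\approx m2^m$, $n$ and $m$ are comparable (the paper uses $n<4m+1$). Your Matveev estimate therefore already gives $m\ll(\log m)^2$, explicitly $m<5.4\times10^{14}$ and $n<2.2\times10^{15}$. Conversely, if Step 1 held as you state it, Steps 2 and 3 would be superfluous: $m\le C\log n$ together with $\alpha^n<m2^m+3$ bounds $n$ immediately. The paper uses the $2$-adic analysis only below this explicit bound. On the non-exceptional classes the closed forms give $m<\log(4m+3)/\log 2+1$, hence $m\le 5$. On $n\equiv 5,10\pmod{14}$ it computes, by following the binary digits of the two $2$-adic zeros, that $\nu_2(R_n-1)\le 51$ for every $n<2.2\times10^{15}$. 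This gives $m\le 51$ and $n\le 139$, and a direct search finishes. No Baker--Davenport step is needed, since for fixed $m$ the index $n$ is essentially determined by $\alpha^n\approx m2^m$.
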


\begin{cor}
The only Cullen prime in the Perrin sequence is $3$.
\end{cor}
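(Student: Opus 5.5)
The corollary follows directly from Theorem~\ref{thm:2}, so the plan is simply to combine that theorem with a primality check of the single surviving value; no new analysis of the Perrin sequence or of Cullen numbers is needed.

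First, recall that a Cullen prime is by definition a Cullen number $C_m=m\cdot 2^m+1$ that is prime. A Cullen prime lying in the Perrin sequence is therefore in particular a Cullen number in the Perrin sequence. By Theorem~\ref{thm:2}, the only such number is $3$.

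It remains to confirm that $3$ is admissible, namely that it is a Cullen number and that it is prime. We have $3=1\cdot 2^1+1=C_1$, and also $3=R_0$, so it is a Perrin number. Since $3$ is clearly prime, the set of Cullen primes in the Perrin sequence is exactly $\{3\}$.

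There is no real obstacle here: all the difficulty lies in Theorem~\ref{thm:2}. That result is proved later, via the $2$-adic analysis of $R_n-1$, linear forms in logarithms and a computational reduction, and we take it as given. The only point to watch is the check that $3$ genuinely is a Cullen prime, rather than the set being empty.
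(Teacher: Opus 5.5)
Your proposal is correct and matches the paper, which gives the corollary as an immediate consequence of Theorem~\ref{thm:2}. Your added check that $3=C_1=R_0$ is prime is exactly the small remaining step.
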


The proofs of Theorems \ref{thm:1} and \ref{thm:2} combine an application of Baker's theory of linear forms in three logarithms of algebraic numbers and the $2$-adic valuation of $\{P_n + 1\}_{n\ge 0}$ and $\{R_n - 1\}_{n\ge 0}$, followed by a computational verification.

\section{Preliminaries}

\subsection{The Padovan and Perrin sequences}

In this section, we collect some useful formulas and inequalities concerning Padovan and Perrin numbers. We begin with a closed formula that allows us to directly calculate the nth Perrin number.
\begin{lemma}\label{eq:binet_perrin}
For all $n\in \mathbb{Z}$, we have
\begin{equation*}
	R_n = \alpha^n + \beta^n + \gamma^n,
\end{equation*}
where $\alpha,\beta,\gamma$ are the zeros of $\phi(X):=X^{3}-X-1$.
\end{lemma}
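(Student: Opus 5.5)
The plan is to show that the sequence $S_n := \alpha^n+\beta^n+\gamma^n$ agrees with $R_n$ on three consecutive indices and satisfies the same third-order recurrence. It will then follow that the two sequences coincide on all of $\mathbb{Z}$, since the recurrence $U_{n+3}=U_{n+1}+U_n$ can be run both forwards and backwards: the backward form is $U_n = U_{n+3}-U_{n+1}$, valid because the coefficient of $U_n$ is $1$, a unit.

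\textbf{Recurrence.} Since each of $\alpha,\beta,\gamma$ is a root of $\phi(X)=X^3-X-1$, we have $\theta^3=\theta+1$ for $\theta\in\{\alpha,\beta,\gamma\}$. Multiplying by $\theta^n$ gives $\theta^{n+3}=\theta^{n+1}+\theta^n$ for every $n\in\mathbb{Z}$; negative powers are allowed because $\alpha\beta\gamma=1\neq 0$, so no root is zero. Summing over the three roots yields $S_{n+3}=S_{n+1}+S_n$ for all $n\in\mathbb{Z}$.

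\textbf{Initial values.} These come from Vieta's formulas for $X^3-X-1$, which give
\[
e_1=\alpha+\beta+\gamma=0,\qquad e_2=\alpha\beta+\alpha\gamma+\beta\gamma=-1,\qquad e_3=\alpha\beta\gamma=1 .
\]
Hence $S_0=3$ and $S_1=e_1=0$. Newton's identity gives $S_2=e_1^2-2e_2=2$. Thus $S_0=R_0$, $S_1=R_1$ and $S_2=R_2$.

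\textbf{Conclusion.} Forward induction on $n\ge 0$ gives $S_n=R_n$ for all $n\ge 0$. Downward induction, using $S_n=S_{n+3}-S_{n+1}$ and $R_n=R_{n+3}-R_{n+1}$, extends the equality to all negative $n$.

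There is no real obstacle here. The only points needing care are that the roots are nonzero, so that negative exponents make sense, and that the definition of $R_n$ for negative $n$ is the backward extension of the recurrence, which is exactly the form used in the downward induction.
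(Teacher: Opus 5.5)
Your proof is correct, but it takes a different and more economical route than the paper.

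\textbf{The paper's route.} It first invokes the general fact that, since $\phi$ has distinct roots, $R_n=c_\alpha\alpha^n+c_\beta\beta^n+c_\gamma\gamma^n$ for unique constants. It then shows $c_\alpha=c_\beta=c_\gamma=1$ by solving the Vandermonde system with Cramer's rule: the column $(3,0,2)^T$ is rewritten as the column of power sums and the determinant is split by multilinearity. For negative indices it computes $\alpha^{-k}+\beta^{-k}+\gamma^{-k}$ separately for $k=1,2,3$, using Vieta's formulas for the reciprocal polynomial $1-X^2-X^3$. It then runs a strong induction with $R_{-n}=R_{-(n-3)}-R_{-(n-1)}$.

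\textbf{Your route.} You verify directly that $S_n$ satisfies the recurrence and agrees with $R_0,R_1,R_2$. Uniqueness of solutions does the rest, forward and backward, since the coefficient of $U_n$ is a unit. This avoids both the structure theorem for linear recurrences, which the paper uses without proof, and the determinant manipulation. Starting the backward induction from $n=0,1,2$ also makes the extra base cases $R_{-1},R_{-2},R_{-3}$ unnecessary.

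\textbf{What each buys.} The paper's approach derives the coefficients rather than checking a guessed closed form. That is what one needs when the coefficients are not obvious, as for the Padovan sequence in the next lemma (where the paper switches to generating functions). Here the answer is the plain power sum, so your guess-and-verify argument is the cleaner proof.
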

\begin{proof}
Since $R_{n+3}-R_{n+1}-R_{n}=0$, the characteristic polynomial of the Perrin sequence is $\phi(X)$. This polynomial has one real root $\alpha$ and two complex conjugate roots $\beta$ and $\gamma$ given by
$$
\alpha=\sqrt[3]{\frac{9+\sqrt{69}}{18}}+\sqrt[3]{\frac{9-\sqrt{69}}{18}},\quad \beta=-\frac{\alpha}{2}+i\frac{\sqrt{3\alpha^{2}-4}}{2},\quad \gamma =\overline{\beta}.
$$
Since the zeros of $\phi(X)$ are distinct, there exist unique constants $c_{\alpha},c_{\beta},c_{\gamma}\in \mathbb{Q}(\alpha,\beta)$ such that  
$$
R_{n}=c_{\alpha}\alpha^{n}+c_{\beta}\beta^{n}+c_{\gamma}\gamma^{n}\quad \text{for all }n\ge 0.
$$
To determine $c_{\alpha}$, $c_{\beta}$, and $c_{\gamma}$, we solve the system of equations:
\begin{align*}
V\begin{pmatrix}c_{\alpha}\\c_{\beta}\\c_{\gamma}\end{pmatrix}=\begin{pmatrix}3\\0\\2\end{pmatrix},\quad \text{where}\quad V:=\begin{pmatrix}1&1&1\\ \alpha &\beta &\gamma \\ \alpha^{2} & \beta^{2} &\gamma^{2}\end{pmatrix}.
\end{align*}
It follows from Vieta's formulas that the zeros $\alpha,\beta,\gamma$ of $\phi(X)$ must satisfy 
$$
\alpha +\beta +\gamma= -\frac{0}{1}=0,\quad \alpha \beta +\beta \gamma +\gamma \alpha=\frac{-1}{1}=-1\quad {\text{\rm and}} \quad \alpha \beta \gamma =-\frac{-1}{1}=1.
$$ 
Therefore, 
$$
\alpha^{2}+\beta^{2}+\gamma^{2}=(\alpha +\beta +\gamma)^{2}-2( \alpha \beta +\beta \gamma + \gamma \alpha)=0^{2}-2(-1)=2.
$$ 
Using Cramer's rule, we get
\begin{align*}
c_{\alpha}=\frac{\begin{vmatrix}3&1&1\\ 0 &\beta &\gamma \\ 2 & \beta^{2} &\gamma^{2}\end{vmatrix}}{|V|}=\frac{\begin{vmatrix}1+1+1&1&1\\ \alpha +\beta +\gamma &\beta &\gamma \\ \alpha^{2}+\beta^{2}+\gamma^{2} & \beta^{2} &\gamma^{2}\end{vmatrix}}{|V|}&=\frac{\begin{vmatrix}1&1&1\\  \alpha &\beta &\gamma \\ \alpha^{2} & \beta^{2} &\gamma^{2}\end{vmatrix}+\begin{vmatrix}1&1&1\\  \beta &\beta &\gamma \\ \beta^{2} & \beta^{2} &\gamma^{2}\end{vmatrix}+\begin{vmatrix}1&1&1\\  \gamma &\beta &\gamma \\ \gamma^{2} & \beta^{2} &\gamma^{2}\end{vmatrix}}{|V|}\\
&=\frac{|V|+0+0}{|V|}=1,
\end{align*}
where, in the penultimate equation, we use the fact that the determinant of a matrix is zero if it has two identical columns. Similarly, we find that $c_{\beta}=c_{\gamma}=1$. Now, we prove by strong induction that
$$
R_{-n}=\alpha^{-n}+\beta^{-n}+\gamma^{-n}\quad \text{for all}\quad n\ge 1.
$$
Since $\alpha,\beta, \gamma$ are zeros of $\phi(X)$, it follows that $\alpha^{-1}$, $\beta^{-1}$, $\gamma^{-1}$ are zeros of 
$$
X^{3}\phi(1/X)=1-X^{2}-X^{3}.
$$ 
By applying Vieta's formulas to $X^{3}\phi(1/X)$ we get that 
$$
\alpha^{-1}+\beta^{-1}+\gamma^{-1}=-1\qquad {\text{\rm and}}\qquad (\alpha \beta)^{-1}+(\beta \gamma)^{-1}+(\gamma \alpha)^{-1}=0.
$$ 
Hence, 
$$
\alpha^{-2}+\beta^{-2}+\gamma^{-2}=(\alpha^{-1}+\beta^{-1}+\gamma^{-1})^{2}-2[(\alpha \beta)^{-1}+(\beta \gamma)^{-1}+(\gamma \alpha)^{-1}]=(-1)^{2}-2(0)=1.
$$ 
Since $R_{-1}=R_{2}-R_{0}=-1$ and $R_{-2}=R_{1}-R_{-1}=1$, we conclude that the statement holds for $n=1$ and $n=2$. To finish the proof of the base case, note that
\begin{align*}
R_{-3}=R_{0}-R_{-2}=3-1&=3-(\alpha^{-2}+\beta^{-2}+\gamma^{-2})=(1-\alpha^{-2})+(1-\beta^{-2})+(1-\gamma^{-2})\\
&=\alpha^{-3}+\beta^{-3}+\gamma^{-3}.
\end{align*}
Suppose that $R_{-k}=\alpha^{-k}+\beta^{-k}+\gamma^{-k}$ for all $1\le k\le n-1$. Then
\begin{align*}
R_{-n}=R_{-n+3}-R_{-n+1}&=R_{-(n-3)}-R_{-(n-1)}\\
&=\alpha^{-(n-3)}+\beta^{-(n-3)}+\gamma^{-(n-3)}-(\alpha^{-(n-1)}+\beta^{-(n-1)}+\gamma^{-(n-1)})\\
&=\alpha^{-n}(\alpha^{3}-\alpha)+\beta^{-n}(\beta^{3}-\beta)+\gamma^{-n}(\gamma^{3}-\gamma)\\
&=\alpha^{-n}+\beta^{-n}+\gamma^{-n}.
\end{align*}
\end{proof}
For the Padovan sequence, a Binet formula is given below.
\begin{lemma}\label{eq:binet_padovan}
For all $n \in \mathbb{Z}$, we have
\begin{equation*}
	P_n = c_{\alpha}\alpha^n + c_{\beta}\beta^n + c_{\gamma}\gamma^n,
\end{equation*}
where $c_{x}:=(x^{2}+x)/(3x^{2}-1)$ for all $x\in \{\alpha,\beta, \gamma\}$.  
\end{lemma}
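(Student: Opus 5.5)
The plan follows the proof of Lemma~\ref{eq:binet_perrin}. Since the characteristic polynomial $\phi(X)=X^3-X-1$ has the three distinct zeros $\alpha,\beta,\gamma$, there are unique constants $a,b,c$ with $P_n=a\alpha^n+b\beta^n+c\gamma^n$ for all $n\ge 0$. It then suffices to show that $x\mapsto (x^2+x)/(3x^2-1)$ gives exactly the solution of the initial-value system. Because $\phi$ is separable, $3x^2-1=\phi'(x)\neq 0$ at each root, so the constants are well defined.

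To solve the system I would use partial fractions on the generating function rather than Cramer's rule. From $P_{n+3}=P_{n+1}+P_n$ and $P_0=P_1=P_2=1$ one gets
\[
\sum_{n\ge0}P_nX^n=\frac{1+X}{1-X^2-X^3}=\frac{1+X}{(1-\alpha X)(1-\beta X)(1-\gamma X)},
\]
using $X^3\phi(1/X)=1-X^2-X^3$, which already appears in the proof of Lemma~\ref{eq:binet_perrin}. The residue at $X=1/x$ gives the coefficient of $1/(1-xX)$:
\[
c_x=\frac{1+1/x}{\prod_{y\neq x}(1-y/x)}=\frac{x^2(1+1/x)}{\prod_{y\ne x}(x-y)}=\frac{x^2+x}{\phi'(x)}=\frac{x^2+x}{3x^2-1}.
\]
This proves the formula for $n\ge0$.

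Alternatively, one can verify directly that $\sum_x c_x x^k$ equals $1$ for $k=0,1,2$, using the identity $\sum_x x^k/\phi'(x)=0$ for $k=0,1$ and $=1$ for $k=2$. This identity is the Lagrange interpolation identity, and it also reduces the $k=2$ case to $\sum_x x^{4}/\phi'(x)+\sum_x x^3/\phi'(x)$ after reducing modulo $\phi$.

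For negative indices I would argue exactly as in Lemma~\ref{eq:binet_perrin}. Both sides satisfy the backward recurrence $U_{-n}=U_{-n+3}-U_{-n+1}$. For the right-hand side this holds since $x^{-n}(x^3-x)=x^{-n}$. As the two sides agree for $n=0,1,2$, downward induction gives equality for all $n\in\mathbb{Z}$.

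The only real computational step is the partial-fraction coefficient identification, that is, making sure the residue equals $(x^2+x)/(3x^2-1)$ and not some other algebraically equivalent expression. I expect this to be routine.
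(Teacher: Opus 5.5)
Your proposal is correct and follows essentially the same route as the paper: it uses the generating function $(1+X)/(1-X^2-X^3)$, partial fractions, and the identity $3x^2-1=\phi'(x)=\prod_{y\neq x}(x-y)$ to obtain $c_x=(x^2+x)/(3x^2-1)$. Your backward-recurrence argument for negative indices, based on $x^{n}(x^3-x)=x^{n}$ and agreement at $n=0,1,2$, supplies exactly the induction that the paper omits.
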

\begin{proof}
Let $P(X)=\displaystyle\sum_{n=0}^{\infty}P_{n}X^{n}$ be the generating function of the Padovan sequence; then
\begin{align*}
P(X)&=1+X+X^{2}+\displaystyle\sum_{n=3}^{\infty}(P_{n-2}+P_{n-3})X^{n}\\
&=1+X+X^{2}+X^{2}\displaystyle\sum_{n=1}^{\infty}P_{n}X^{n}+X^{3}\displaystyle\sum_{n=0}^{\infty}P_{n}X^{n}\\
&=1+X+X^{2}+X^{2}(P(X)-P_{0})+X^{3}P(X),
\end{align*}
so
\begin{equation*}
P(X)=\frac{1+X}{1-X^{2}-X^{3}}.
\end{equation*}
Since the roots of $1-X^{2}-X^{3}=0$ are distinct, by partial fractions
\begin{equation*}
P(X)=\frac{1+X}{(1-\alpha X)(1-\beta X)(1-\gamma X)}=\frac{c_{\alpha}}{(1-\alpha X)}+\frac{c_{\beta}}{(1-\beta X)}+ \frac{c_{\gamma}}{(1-\gamma X)}.
\end{equation*}
Here,
$$
c_{\alpha}=\frac{\alpha^{2}+\alpha}{3\alpha^{2}-1},\quad c_{\beta}=\frac{\beta^{2}+\beta}{3\beta^{2}-1},\quad c_{\gamma}=\frac{\gamma^{2}+\gamma}{3\gamma^{2}-1}.
$$
In the above we have used that $X^{3}-X-1=(X-\alpha)(X-\beta)(X-\gamma)$ implies that 
$$
3X^{2}-1=(X-\beta)(X-\gamma)+(X-\alpha)(X-\gamma)+(X-\alpha)(X-\beta),
$$ 
and so 
$$
3\alpha^{2}-1=(\alpha -\beta )(\alpha -\gamma),\quad 3\beta^{2}-1=(\beta -\alpha )(\beta -\gamma),\quad {\text{\rm  and}}\quad 3\gamma^{2}-1=(\gamma -\alpha )(\gamma -\beta).
$$ 
Consequently,
\begin{equation*}
P(X)=c_{\alpha}\displaystyle \sum_{n=0}^{\infty}(\alpha X)^{n}+c_{\beta}\displaystyle \sum_{n=0}^{\infty}(\beta X)^{n}+c_{\gamma}\displaystyle \sum_{n=0}^{\infty}(\gamma X)^{n}=\displaystyle \sum_{n=0}^{\infty}(c_{\alpha}\alpha^{n}+c_{\beta}\beta^{n}+c_{\gamma}\gamma^{n})X^{n}.
\end{equation*}
Thus, Binet's formula for the Padovan sequence is
$$
P_{n}=c_{\alpha}\alpha^n + c_{\beta}\beta^n + c_{\gamma}\gamma^n\quad \text{for all}\quad n\ge 0.
$$
This formula can be extended to negative integers using, for example, strong mathematical induction. We omit the details.
\end{proof}

The following growth estimates hold for both sequences.

\begin{lemma}\label{lem:growth_P}
	For all non-negative integer $n$, we have
	\begin{equation*}
		 P_n \ge \alpha^{n-2}.
	\end{equation*}
\end{lemma}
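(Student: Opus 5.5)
We argue by strong induction on $n$, using only the recurrence $P_{n+3}=P_{n+1}+P_n$ and the defining property $\alpha^3=\alpha+1$ of the real root $\alpha\approx 1.3247$ of $\phi(X)$ from Lemma \ref{eq:binet_perrin}. The Binet formula of Lemma \ref{eq:binet_padovan} is not needed here; the inequality is purely combinatorial once the base cases are checked.

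\emph{Base cases.} For $n=0,1,2$ we have $P_n=1$. The right-hand sides are $\alpha^{-2}<1$, $\alpha^{-1}<1$ and $\alpha^{0}=1$, so $P_n\ge \alpha^{n-2}$ holds in each case because $\alpha>1$.

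\emph{Inductive step.} Let $n\ge 3$ and assume $P_k\ge \alpha^{k-2}$ for all $0\le k<n$. Since $n-2\ge 1$ and $n-3\ge 0$, the induction hypothesis applies to both $n-2$ and $n-3$. The recurrence then gives
\begin{equation*}
P_n=P_{n-2}+P_{n-3}\ge \alpha^{n-4}+\alpha^{n-5}=\alpha^{n-5}(\alpha+1)=\alpha^{n-5}\alpha^{3}=\alpha^{n-2},
\end{equation*}
using $\alpha^{3}=\alpha+1$. This completes the induction.

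The only point needing care is that the base cases cover enough initial indices. The recurrence steps back by $2$ and $3$, so three consecutive starting values $n=0,1,2$ are required. With these in place, the inductive step is valid for every $n\ge 3$.
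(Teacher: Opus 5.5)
Your proof is correct and is essentially the paper's argument: strong induction with base cases $n=0,1,2$ and the step $P_n=P_{n-2}+P_{n-3}\ge \alpha^{n-4}+\alpha^{n-5}=\alpha^{n-2}$. The only cosmetic difference is that the paper writes the key identity as $\alpha^{-2}+\alpha^{-3}=1$, using that $\alpha^{-1}$ is a root of $1-X^2-X^3$; this is equivalent to your $\alpha^3=\alpha+1$.
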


\begin{proof}
We prove this result by strong induction on $n$. If $n=0,1,2$, then $P_{0}=1>0.57>\alpha^{-2}$, $P_{1}=1>0.76>\alpha^{-1}$, and $P_{2}=1\ge 1= \alpha^{0}$, respectively. Suppose that $P_{k}\ge \alpha^{k-2}$ for all $0\le k\le n-1$. Thus, using the recurrence relation of the Padovan sequence and the fact that $\alpha^{-1}$ is a zero of $X^{3}\phi(1/X)$, we obtain
$$
P_{n}=P_{n-2}+P_{n-3}\ge \alpha^{n-4}+\alpha^{n-5}=\alpha^{n-2}(\alpha^{-2}+\alpha^{-3})=\alpha^{n-2}.
$$
\end{proof}

\begin{lemma}\label{lem:growth_R}
	For all integer $n \ge 7$, we have
	\begin{equation*}
		R_{n}-1>\alpha^{n-1}.
	\end{equation*}
\end{lemma}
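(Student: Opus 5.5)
We argue by strong induction on $n$, in the same way as Lemma~\ref{lem:growth_P}. Since the sequence $R_n-1$ satisfies the inhomogeneous recurrence
$$
R_{n}-1=(R_{n-2}-1)+(R_{n-3}-1)+1,
$$
it is convenient to prove the stronger-looking but equivalent form in which the extra $+1$ is simply discarded. We need three consecutive base cases, since the recurrence looks back three steps. We take $n=7,8,9$. Here $R_7=7$, $R_8=10$ and $R_9=12$. Using $\alpha\approx 1.3247$, we get $\alpha^{6}\approx 5.41$, $\alpha^{7}\approx 7.16$ and $\alpha^{8}\approx 9.49$. Hence $R_7-1=6>\alpha^6$, $R_8-1=9>\alpha^7$ and $R_9-1=11>\alpha^8$. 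These numerical checks are routine; I would state crude rational bounds such as $1.32<\alpha<1.33$ to make them rigorous.

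For the inductive step, let $n\ge 10$ and assume $R_k-1>\alpha^{k-1}$ for all $7\le k\le n-1$. Then $n-2$ and $n-3$ are both at least $7$, so the inductive hypothesis applies to both. We obtain
$$
R_n-1=(R_{n-2}-1)+(R_{n-3}-1)+1>\alpha^{n-3}+\alpha^{n-4}=\alpha^{n-1}(\alpha^{-2}+\alpha^{-3})=\alpha^{n-1}.
$$
The last equality uses the identity $\alpha^{-2}+\alpha^{-3}=1$. This identity holds because $\alpha^{-1}$ is a zero of $X^{3}\phi(1/X)=1-X^{2}-X^{3}$, as already noted in the proof of Lemma~\ref{lem:growth_P}.

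The only real obstacle is choosing the base cases correctly. The inequality fails for small $n$: for example, $R_5-1=4<\alpha^{4}\approx 3.08$ is false, and $R_6-1=4<\alpha^5\approx4.08$, so the inequality fails at $n=6$. This is why the lemma starts at $n=7$. We therefore need three consecutive verified values beginning at $7$, rather than relying on earlier terms in the induction. Once $n=7,8,9$ are checked, the induction closes with strict inequality. In fact there is slack of $+1$, so strictness is automatic.
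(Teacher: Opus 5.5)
Your proof is correct and follows the paper's argument essentially verbatim: the same base cases $n=7,8,9$, and the same strong induction step $R_n-1=(R_{n-2}-1)+(R_{n-3}-1)+1>\alpha^{n-3}+\alpha^{n-4}=\alpha^{n-1}$ via $\alpha^{-2}+\alpha^{-3}=1$. One slip is in your closing remark: the inequality actually holds at $n=5$ (since $R_5-1=4>\alpha^4\approx 3.08$), so $n=6$, where $R_6-1=4<\alpha^5\approx 4.08$, is the relevant failure showing the lemma cannot start earlier.
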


\begin{proof}
We prove this result by strong induction on $n$. If $n=7,8,9$, then $R_{7}-1=6>5.41>\alpha^{6}$, $R_{8}-1=9>7.16>\alpha^{7}$, and $R_{9}-1=11>9.49>\alpha^{8}$, respectively. Suppose that $R_{k}-1>\alpha^{k-1}$ for all $7\le k\le n-1$. Thus, using the recurrence relation of the Perrin sequence and the fact that $\alpha^{-1}$ is a root of $X^{3}\phi(1/X)=0$, we obtain
$$
R_{n}-1=R_{n-2}+R_{n-3}-1>(\alpha^{n-3}+1)+(\alpha^{n-4}+1)-1=\alpha^{n-1}(\alpha^{-2}+\alpha^{-3})+1>\alpha^{n-1}.
$$
\end{proof}

Next, Lemma \ref{L1} provides a sum formula for the numbers of Padovan and Perrin (see Sokhuma \cite[Proposition 2.2]{KS}).
\begin{lemma}\label{L1}
	For all positive integers $r,s$ we have the relations:
	\begin{align*}
		P_{r+s}&=P_{s-1}P_{r-1}+P_{s-2}P_{r}+P_{s-3}P_{r-2},\\
		R_{r+s}&=P_{s-1}R_{r-1}+P_{s}R_{r-2}+P_{s-2}R_{r-3}.
	\end{align*}
\end{lemma}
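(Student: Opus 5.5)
I will prove both identities by fixing $r$ and inducting on $s$. Both sequences are extended to all of $\mathbb{Z}$ via the recurrence, as in Lemmas \ref{eq:binet_perrin} and \ref{eq:binet_padovan}, so every index appearing is meaningful even when $r-3<0$. The main observation is that, for fixed $r$, each right-hand side is a linear combination, with coefficients independent of $s$, of the shifted Padovan terms $P_{s-1},P_{s-2},P_{s-3}$ (respectively $P_{s-1},P_s,P_{s-2}$). Each of these satisfies $U_{s+3}=U_{s+1}+U_s$ as a function of $s$, so each right-hand side does too. The left-hand sides $P_{r+s}$ and $R_{r+s}$ obviously satisfy the same recurrence in $s$. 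Two solutions of a third-order linear recurrence that agree at three consecutive indices agree everywhere, so it suffices to check $s=1,2,3$. Strong induction on $s$ then gives the result for all $s\ge 1$.

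For the base cases I will first compute the needed negative-index Padovan values from $P_{n}=P_{n+3}-P_{n+1}$:
$$P_{-1}=P_2-P_0=0,\qquad P_{-2}=P_1-P_{-1}=1.$$
I also record $P_3=2$.

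For the Padovan identity:
\begin{itemize}
\item For $s=1$ the right side is $P_0P_{r-1}+P_{-1}P_r+P_{-2}P_{r-2}=P_{r-1}+P_{r-2}=P_{r+1}$.
\item For $s=2$ it is $P_1P_{r-1}+P_0P_r+P_{-1}P_{r-2}=P_{r-1}+P_r=P_{r+2}$.
\item For $s=3$ it is $P_2P_{r-1}+P_1P_r+P_0P_{r-2}=P_r+(P_{r-1}+P_{r-2})=P_r+P_{r+1}=P_{r+3}$.
\end{itemize}

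For the Perrin identity:
\begin{itemize}
\item For $s=1$ the right side is $P_0R_{r-1}+P_1R_{r-2}+P_{-1}R_{r-3}=R_{r-1}+R_{r-2}=R_{r+1}$.
\item For $s=2$ it is $R_{r-1}+R_{r-2}+R_{r-3}=R_{r-1}+R_r=R_{r+2}$.
\item For $s=3$ it is $R_{r-1}+2R_{r-2}+R_{r-3}=(R_{r-1}+R_{r-2})+(R_{r-2}+R_{r-3})=R_{r+1}+R_r=R_{r+3}$.
\end{itemize}
All of these use only the recurrence, valid for every integer index.

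There is no real obstacle here. The only point needing care is to justify that negative indices are allowed, since $r-2$ and $r-3$ may be negative when $r$ is small. I handle this by noting that both sequences are defined on $\mathbb{Z}$ and the recurrence holds for all integer indices. With that settled, the argument reduces to the three routine base-case checks above.
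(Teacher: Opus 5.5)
Your proof is correct. For fixed $r$, each right-hand side is an $s$-independent linear combination of shifts of $\{P_s\}$, so it satisfies $U_{s+3}=U_{s+1}+U_s$ in $s$. Your three base cases (using $P_{-1}=0$, $P_{-2}=1$, $P_3=2$) all check out.

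The paper gives no argument of its own here; it simply cites Sokhuma's Proposition~2.2, whose setting is matrix formulas for these sequences. In that setting the natural derivation uses powers of $Q=\left(\begin{smallmatrix}0&1&0\\0&0&1\\1&1&0\end{smallmatrix}\right)$. One shows by induction that the third row of $Q^{n}$ is $(P_{n-3},P_{n-1},P_{n-2})$ and its second column is $(P_{n-3},P_{n-2},P_{n-1})^{T}$. The $(3,2)$ entry of $Q^{r+s+1}=Q^{r+1}Q^{s}$ is then exactly the Padovan identity. The Perrin identity is the first coordinate of $(R_{r+s},R_{r+s+1},R_{r+s+2})^{T}=Q^{s+3}(R_{r-3},R_{r-2},R_{r-1})^{T}$.

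The matrix route produces a whole family of addition formulas at once by reading off different entries. Your route is more elementary: it uses only the recurrence and uniqueness of solutions from three consecutive values. It also makes explicit something the citation leaves implicit, namely that the negative indices $P_{s-3}$, $P_{r-2}$, $R_{r-3}$ arising for small $r,s$ cause no trouble. In fact your argument proves both identities for all $r,s\in\mathbb{Z}$.
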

\noindent
We continue with the 2-adic valuation of Padovan and Perrin numbers, which was fully characterized by Irmak \cite[Lemmas 2.5 and 2.6]{NI}.
\begin{lemma}\label{L2}
	For $n\ge 0$, we have 
	\begin{align*}
		\nu_2\left(P_{n}\right)= 
		\begin{cases}
			0, & \mbox{if } n\equiv 0,1,2,5,\pmod{7};\\
			\nu_2(n+4)+1, & \mbox{if } n\equiv 3\pmod{7};\\
			\nu_2((n+3)(n+17))+1, & \mbox{if }  n\equiv 4\pmod{7};\\
			\nu_2((n+1)(n+8))+1, & \mbox{if }  n\equiv 6\pmod{7}.
		\end{cases}
	\end{align*}
\end{lemma}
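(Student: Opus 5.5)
The statement is Lemma \ref{L2}, the $2$-adic valuation of $P_n$, attributed to Irmak. I would prove it by working in the exact framework of periodicity modulo powers of $2$, combined with the addition formula of Lemma \ref{L1}.

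\textbf{The residues with $\nu_2(P_n)=0$.} First I check that the Padovan sequence modulo $2$ has period $7$. The values $P_0,\dots,P_6=1,1,1,2,2,3,4$ reduce to $1,1,1,0,0,1,0$, and $P_7=5$, $P_8=7$, $P_9=9$ bring back the initial triple $1,1,1$. Hence $P_n$ is odd exactly when $n\equiv 0,1,2,5\pmod 7$, which gives the first case.

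\textbf{Each even residue class.} For the remaining classes I would write $n=7k+j$ with $j\in\{3,4,6\}$ and study how $P_{7k+j}$ depends on $k$ $2$-adically. The key tool is Lemma \ref{L1} with $s=7\cdot 2^t$. The plan is to show by induction on $t$ that
\[
P_{m+7\cdot 2^{t}}\equiv P_m+2^{t+1}\,u_m \pmod{2^{t+2}},
\]
where $u_m$ is a unit pattern depending only on $m \bmod 7$. The base case would be verified by computing $P_{7\cdot 2^t-1},P_{7\cdot 2^t-2},P_{7\cdot 2^t-3}$ modulo $2^{t+2}$, for instance through the doubling formulas obtained from Lemma \ref{L1} with $r=s$. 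This is the analogue of the classical ``lifting the exponent'' for Fibonacci numbers. It implies that, within a class $n\equiv j\pmod 7$, the map $n\mapsto P_n$ is $2$-adically like a polynomial in $n$ with the stated zeros. In other words, $P_n$ vanishes to order $1$ at $n=-4$ (class $3$), and at $n=-3$ and $n=-17$ (class $4$), and at $n=-1$ and $n=-8$ (class $6$), up to a factor of $2$. One checks that $P_{-4}=0$, $P_{-3}=0$, $P_{-17}=0$, $P_{-1}=0$ and $P_{-8}=0$ from the backward recurrence; these are known zeros of the Padovan sequence.

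\textbf{A $p$-adic analytic formulation.} A cleaner route is $2$-adic interpolation. Since $A^7\equiv I\pmod 2$ for the companion matrix $A$, the quantity $A^{7\cdot 2^t}$ converges to $I$ $2$-adically. So for each $j$, the function $k\mapsto P_{7k+j}$ extends to a $2$-adic analytic function $f_j$ on $\mathbb{Z}_2$. By Strassmann's theorem, the zeros of $f_j$ are controlled by the coefficients, and $f_j(x)=2c\prod_i (x-x_i)\cdot(\text{unit})$, where the $x_i$ are the zeros coming from the negative-index zeros above. Then $\nu_2(P_n)=1+\sum_i\nu_2(k-x_i)$, and rewriting $k-x_i$ in terms of $n$ (the factor $7$ is a $2$-adic unit) gives $\nu_2(n+4)$, $\nu_2((n+3)(n+17))$ and $\nu_2((n+1)(n+8))$.

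\textbf{Main obstacle.} The hardest step is proving there are no extra zeros and that the leading coefficient has valuation exactly $1$. Concretely, this means computing the first two coefficients of each $f_j$ modulo small powers of $2$, say from $P_n$ modulo $16$ or $32$ over one period of length $7\cdot 8$, and then applying Strassmann's bound. I would do this by a finite explicit computation of $P_n \bmod 64$.
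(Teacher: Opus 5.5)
Your proposal has a genuine gap: the step everything rests on fails as stated, and the decisive computation is deferred. For reference, the paper does not prove this lemma; it quotes it from Irmak \cite{NI}. Your mod $2$ analysis is correct, as is your list of integer zeros $P_{-4}=P_{-3}=P_{-17}=P_{-1}=P_{-8}=0$ in the classes $3,4,4,6,6\pmod 7$. The final shape $\nu_2(P_{7k+j})=1+\sum_i\nu_2(k-x_i)$ is also the right answer.

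The failing step is the claim that $k\mapsto P_{7k+j}$ is given by a single power series converging on $\mathbb{Z}_2$. Without that, neither Strassmann's theorem nor Weierstrass preparation applies to it. At $p=2$ the claim is false. We have $A^7=I+2(A^2+A)$, and $A^2+A$ is invertible mod $2$ because $X^3+X+1$ is irreducible over $\mathbb{F}_2$. So $\log A^7$ has valuation exactly $1$, while the $2$-adic exponential only converges on elements of valuation at least $2$.

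Concretely, take $f(k)=P_{7k+3}$ with $P_l=e_1^TA^lv_0$. Then $\Delta^i f(0)=2^i\,e_1^T(A^2+A)^iA^3v_0$. For $i=2^r$ the integer factor is congruent mod $2$ to $P_{2^{r+1}+3}+P_{2^r+3}$, which is odd whenever $r\equiv 0,2\pmod 3$. Hence $\nu_2\bigl(\Delta^{2^r}f(0)/(2^r)!\bigr)=1$ infinitely often. By the Mahler--Amice criterion, $f$ is therefore not a restricted power series, and your factorization $f_j=2c\prod(x-x_i)\cdot(\text{unit})$ cannot hold. The fix is to use $A^{14}=I+4(3A^2+4A+2I)$ and interpolate $g_j(k)=P_{14k+j}$ for $j\in\{3,4,6,10,11,13\}$.

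After that fix, the content of the lemma is exactly the computation you leave undone. The integer zeros distribute as follows:
\begin{itemize}
\item none in the classes $3,4\pmod{14}$;
\item $-4$ in $10\pmod{14}$, $-8$ in $6\pmod{14}$, and $-1$ in $13\pmod{14}$;
\item both $-3$ and $-17$ in $11\pmod{14}$.
\end{itemize}
On the classes $3,4,10,6,13,11\pmod{14}$ you must show the minimal coefficient valuations are $1,1,2,2,2,3$ and the Weierstrass degrees are exactly $0,0,1,1,1,2$. For $n\equiv 11\pmod{14}$ this requires the coefficients of $k^0,k^1,k^2$ together with a tail bound, not just ``the first two coefficients''. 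Until this is done, nothing rules out an extra non-rational $2$-adic zero. That phenomenon genuinely occurs for $P_n+1$ in the class $50\pmod{112}$ (Proposition \ref{prop:Pn}).

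Your first route does not repair this. The congruence $P_{m+7\cdot 2^t}\equiv P_m+2^{t+1}u_m\pmod{2^{t+2}}$ with $u_m$ a unit is false for $m\equiv 1,2,4\pmod 7$ (compare Lemma \ref{L4}; for example $P_{25}-P_{-3}=816\equiv 0\pmod{16}$). Moreover, a first-order congruence modulo $2^{t+2}$ only yields $\nu_2(P_n)\ge t+2$ for $n=7\cdot 2^tk-3$ with $k$ odd. Proving equality there (true for $t\ge 2$) needs information modulo $2^{t+3}$.
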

\begin{lemma}\label{L3}
	For $n\ge 0$, we have
	$$
	\nu_2\left(R_{n}\right)= 
	\begin{cases}
		0, & \mbox{if } n\equiv 0,3,5,6,\pmod{7};\\
		1, & \mbox{if }  n\equiv 2,4,11\pmod{14};\\
		2, & \mbox{if }  n\equiv 9\pmod{14};\\
		\nu_2(n-1)+1, & \mbox{if } n\equiv 1\pmod{7}.
	\end{cases}
	$$
\end{lemma}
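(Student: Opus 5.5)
The plan is to view the Perrin sequence through its companion matrix and study how powers of that matrix behave modulo powers of $2$. The key fact will be that $M^{7}\equiv I \pmod 2$. Let
$$
M=\begin{pmatrix}0&1&0\\0&0&1\\1&1&0\end{pmatrix},\qquad v_n=\begin{pmatrix}R_n\\R_{n+1}\\R_{n+2}\end{pmatrix},\qquad v_{n+m}=M^{m}v_n .
$$
The same information is encoded by Lemma~\ref{L1}: with $s=m$, $R_{r+m}$ is a combination of $R_{r-1},R_{r-2},R_{r-3}$ with coefficients $P_{m-1},P_m,P_{m-2}$. So one can equally work with the three Padovan numbers $P_{m-2},P_{m-1},P_m$ modulo powers of $2$.

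\textbf{Step 1 (the odd classes).} Reducing modulo $2$, the sequence $R_n \bmod 2$ is $1,0,0,1,0,1,1,1,0,0,\dots$, which is periodic with period $7$. Equivalently, $M^{7}\equiv I\pmod 2$. Hence $R_n$ is odd exactly for $n\equiv 0,3,5,6\pmod 7$, which gives the first line of Lemma~\ref{L3}.

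\textbf{Step 2 (lifting the period).} Write $M^{7}=I+2B$ with $B$ an integer matrix. I will prove by induction on $k$ that
$$
M^{7\cdot 2^k}=I+2^{k+1}B_k,\qquad B_k\equiv B \pmod 2 .
$$
The induction step is just squaring: $(I+2^{j}B_k)^2=I+2^{j+1}B_k+2^{2j}B_k^2$, and $2j\ge j+2$ for $j\ge 2$. The case $j=1$, i.e.\ passing from $M^{7}$ to $M^{14}$, needs a direct check that $B+B^2\equiv B\pmod 2$, or else the statement should be adjusted to whatever the explicit computation of $M^{14}$ gives.

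For $t$ odd, the binomial expansion then gives
$$
M^{7\cdot 2^k t}\equiv I+2^{k+1}B\pmod{2^{k+2}} .
$$

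\textbf{Step 3 (the class $n\equiv 1\pmod 7$).} Write $n=1+7\cdot 2^k t$ with $t$ odd, so that $\nu_2(n-1)=k$. Since $v_1=(0,2,1)^T$, we get
$$
R_n\equiv R_1+2^{k+1}(Bv_1)_1 = 2^{k+1}(Bv_1)_1 \pmod{2^{k+2}} .
$$
It therefore suffices to check that the first coordinate of $Bv_1$ is odd. This is a single finite computation, consistent with $R_8=10$ having $\nu_2=1$. It yields $\nu_2(R_n)=\nu_2(n-1)+1$.

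\textbf{Step 4 (the classes $n\equiv 2,4\pmod 7$).} By Step 2, $M^{14}\equiv I\pmod 4$, so $R_n\bmod 4$ depends only on $n\bmod 14$. Computing $R_2,R_4,R_9,R_{11}$ and their neighbours modulo $4$ shows $R_n\equiv 2\pmod 4$ for $n\equiv 2,4,11\pmod{14}$. For $n\equiv 9\pmod{14}$ we have $R_n\equiv 0\pmod 4$, so we must go modulo $8$. Writing $M^{14}=I+4D$, we get $R_{n+14}\equiv R_n+4(Dv_n)_1\pmod 8$. Here $v_n\equiv v_9\pmod 2$, so $(Dv_n)_1\equiv (Dv_9)_1\pmod 2$. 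I will check that this parity is even, so that $R_n\equiv R_9=12\equiv 4\pmod 8$ throughout that class, which gives $\nu_2=2$.

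The main obstacle is Step 2 together with the oddness check in Step 3. The lifting must be done carefully at the first squaring, since $M^{7}-I$ is only divisible by $2$ and not by $4$. The exact valuation also hinges on the relevant entry of $B$ being a unit modulo $2$. If the clean lemma fails at $j=1$, I would instead compute $M^{14}$ explicitly and start the induction from $M^{14}=I+4B'$, handling $t$ odd versus the extra factor of $2$ directly.
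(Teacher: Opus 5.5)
The paper does not prove this lemma; it quotes it from Irmak \cite{NI}, so your argument is judged on its own. Your framework is right: companion matrix, $M^7\equiv I\pmod 2$, then lifting by squaring. It is the matrix form of the squaring chain \eqref{eq:alpha} that the paper uses elsewhere. It does not, however, work in the clean form you state first, because the claim $B_k\equiv B\pmod 2$ in Step 2 is false at the first squaring. From $M^3=M+I$ you get $M^7=I+2B$ with $B=M^2+M$. Then $B^2=M^4+2M^3+M^2=2M^2+3M+2I\equiv M\pmod 2$, which is invertible mod $2$, not zero. Hence $M^{14}=I+4(B+B^2)=I+4(3M^2+4M+2I)$, matching the paper's $\alpha^{14}$ in \eqref{eq:alpha}, and $B_1\equiv M^2\not\equiv M^2+M=B\pmod 2$. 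So your fallback is mandatory, not optional. For $k\ge 1$ the squaring step does preserve residues, since $B_{k+1}=B_k+2^kB_k^2$, giving $B_k\equiv M^2\pmod 2$ for all $k\ge1$. Only $k=0$ uses $B$ itself.

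The parity you need survives in both regimes, so the conclusion stands. Use $v_1=(0,2,3)^T$, since $R_3=3$ rather than $1$; this is harmless mod $2$. Then $(Bv_1)_1=R_2+R_3=5$ and, for $k\ge1$, $(B_kv_1)_1\equiv(M^2v_1)_1=R_3=3$ are both odd. So Step 3 gives $\nu_2(R_n)=k+1=\nu_2(n-1)+1$; for example $R_8=10$, $R_{15}=68$, $R_{29}=3480=8\cdot 435$. The case $n=1$, where $R_1=0$, is just the convention $\nu_2(0)=\infty$ and lies outside your parametrization.

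Step 4 is correct as written. With $D=3M^2+4M+2I$ you have $(Dv_n)_1=3R_{n+2}+4R_{n+1}+2R_n\equiv R_{n+2}\pmod 2$. This is even for $n\equiv 9\pmod{14}$ because $n+2\equiv 4\pmod 7$, so $R_n\equiv R_9=12\equiv 4\pmod 8$ on that class. With this correction the proof is complete.

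For context, the residue $(B_kv_j)_1\equiv R_{j+2}\pmod 2$ for $k\ge1$ is odd exactly when $j\equiv 1,3,4,5\pmod 7$. That is precisely the mechanism behind Lemma \ref{L5}. The different residue of $B_0$ is why that lemma requires $t\ge1$. For $j=1$ both residues happen to be odd, which is why your Step 3 goes through uniformly in $k$.
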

\noindent
We also quote two results on these sequences found by Bravo and Irmak \cite[Lemmas 3.5 and 3.6]{BrIrmak}.
\begin{lemma}\label{L4}
	For the integers $j$ and $k,t\ge 1$, we get
	$$
	P_{7\cdot 2^{t}k+j}\equiv\begin{cases}
		P_{j}\pmod{2^{t+2}}, & \text{if } j\equiv 1,2,4\pmod{7},\\
		P_{j}+k\cdot 2^{t+1}\pmod{2^{t+2}}, & \text{otherwise}.
	\end{cases}
	$$
\end{lemma}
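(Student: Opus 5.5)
The plan is to work with the companion matrix of the recurrence and to track its powers $2$-adically. Let
$$
M=\begin{pmatrix}0&1&1\\1&0&0\\0&1&0\end{pmatrix},\qquad v_n=\begin{pmatrix}P_{n}\\P_{n-1}\\P_{n-2}\end{pmatrix},
$$
so that $v_{n+1}=Mv_n$ for all $n\in\mathbb{Z}$. Note that $M$ is invertible over $\mathbb{Z}$, since $\det M=1$. Hence $v_{m+j}=M^{m}v_j$ for every integer $j$, including negative ones, so negative $j$ need no separate treatment. The claim then reduces to a statement about $M^{7\cdot 2^t k}$ modulo $2^{t+2}$.

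\emph{Step 1: the key congruence.} I will show that there is a fixed integer matrix $N$ such that
$$
M^{7\cdot 2^{t}}\equiv I+2^{t+1}N \pmod{2^{t+2}}\qquad\text{for all } t\ge 1.
$$
The base case $t=1$ is a finite computation. One computes $M^{7}$ and $M^{14}$ explicitly, for instance by reading off Padovan numbers, since the entries of $M^n$ are of the form $P_{n+c}$ for small shifts $c$. One then checks that $M^{14}\equiv I\pmod 4$, and $N$ is defined by $N:=(M^{14}-I)/4$, which only matters modulo $2$.

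For the induction step, suppose $M^{7\cdot 2^t}=I+2^{t+1}N+2^{t+2}E$ with $E$ an integer matrix. Squaring gives
$$
M^{7\cdot 2^{t+1}}=I+2^{t+2}N+2^{t+3}E+2^{2t+2}(N+2E)^2 .
$$
Here $N$ and $E$ are polynomials in $M$, so everything commutes and this expansion is legitimate. Since $2t+2\ge t+3$ for $t\ge1$, this is congruent to $I+2^{t+2}N$ modulo $2^{t+3}$, which closes the induction.

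\emph{Step 2: passing to general $k$.} Raising the congruence to the $k$-th power and using the binomial theorem (all cross terms are divisible by $2^{2t+2}$) gives
$$
M^{7\cdot 2^tk}\equiv I+k\,2^{t+1}N \pmod{2^{t+2}}.
$$
Applying this to $v_j$ and taking the first coordinate yields
$$
P_{7\cdot 2^tk+j}\equiv P_j+k\,2^{t+1}Q_j\pmod{2^{t+2}},
$$
where $Q_j$ is the first coordinate of $Nv_j$. Only the parity of $Q_j$ matters. Since $N$ commutes with $M$, we have $Nv_{j+1}=MNv_j$, so $(Q_j)$ satisfies the Padovan recurrence. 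Modulo $2$, that recurrence has period $7$, because $M^7\equiv I\pmod 2$. Therefore the parity of $Q_j$ is determined by $Q_0,Q_1,Q_2 \bmod 2$ and depends only on $j \bmod 7$. Computing these three values from the explicit $N$, I expect to find $Q_j$ odd exactly when $j\equiv 0,3,5,6\pmod 7$ and even when $j\equiv1,2,4\pmod 7$. This gives precisely the two cases of the lemma.

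\emph{Main obstacle.} The only genuinely delicate point is the base computation. One must verify that $M^{14}\equiv I\pmod 4$, and not merely $M^{7}\equiv I\pmod 2$, and then determine $N \bmod 2$ correctly so that the residue classes $1,2,4$ come out as the ones with even correction. I would do this by writing $M^{14}$ in terms of $P_{12},\dots,P_{16}=16,21,28,37,49$ and reducing modulo $8$, then checking the resulting $Q_0,Q_1,Q_2$ against small cases of the lemma, e.g.\ $P_{14}\equiv P_0+2\cdot 2 \pmod 8$ and $P_{15}\equiv P_1\pmod 8$.
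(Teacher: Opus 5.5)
Your proof is correct, and the one computation you leave open works out. From $M^{7}=I+2(M^{2}+M)$, which is the matrix form of $\alpha^7=1+2(\alpha^2+\alpha)$, one gets $M^{14}=I+4(3M^{2}+4M+2I)$. Hence $N=3M^{2}+4M+2I\equiv M^{2}\pmod 2$, so $Nv_j\equiv v_{j+2}\pmod 2$, i.e.\ $Q_j\equiv P_{j+2}\pmod 2$. By Lemma~\ref{L2} this is odd exactly when $j+2\equiv 0,1,2,5\pmod 7$, that is, $j\equiv 0,3,5,6\pmod 7$, as you predicted. There is one small slip: $16,21,28,37,49$ are $P_{11},\dots,P_{15}$, and the values $P_{12},\dots,P_{16}$ are $21,28,37,49,65$. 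Your checks $P_{14}=37\equiv 5$ and $P_{15}=49\equiv 1\pmod 8$ are right.

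The paper gives no proof of this lemma; it is quoted from Bravo and Irmak, so there is nothing to compare step by step. Your route is, however, the mechanism the paper itself uses in $\mathbb{Z}[\alpha]$ in Case~6 of Proposition~\ref{prop:Pn} and in the subcase $n\equiv 3\pmod{14}$ of Proposition~\ref{prop:Rn}. There it squares $\alpha^{7}$ repeatedly, applies the binomial theorem, and rewrites the correction term as a combination of shifted sequence values.

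Working with $M$ instead of $\alpha,\beta,\gamma$ has three advantages:
\begin{itemize}
\item it avoids the Binet constants;
\item it handles negative $j$ for free via $\det M=1$;
\item it explains the case split: the classes $1,2,4\pmod 7$ are precisely the $j$ with $P_{j+2}$ even.
\end{itemize}
The same computation with $v_j$ replaced by $(R_j,R_{j-1},R_{j-2})^{T}$ gives Lemma~\ref{L5} verbatim. Indeed, by Lemma~\ref{L3}, $R_{j+2}$ is even exactly when $j\equiv 0,2,6\pmod 7$.
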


\begin{lemma}\label{L5}
	For the integers $j$ and $k,t\ge 1$, we have
	$$
	R_{7\cdot 2^{t}k+j}\equiv\begin{cases}
		R_{j}\pmod{2^{t+2}}, & \text{if } j\equiv 0,2,6\pmod{7},\\
		R_{j}+k\cdot 2^{t+1}\pmod{2^{t+2}}, & \text{otherwise}.
	\end{cases}
	$$
\end{lemma}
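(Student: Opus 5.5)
The plan is to apply the companion matrix of the Perrin recurrence to a vector of three consecutive terms, and to lift the period of the sequence modulo $2$ to higher powers of $2$ by repeated squaring. Let
$$
M=\begin{pmatrix}0&1&0\\0&0&1\\1&1&0\end{pmatrix},\qquad v_j=\begin{pmatrix}R_j\\R_{j+1}\\R_{j+2}\end{pmatrix},
$$
so that $M v_j=v_{j+1}$ for every $j\in\mathbb{Z}$ (the recurrence runs in both directions because $\det M=1$). Consequently $R_{N+j}$ is the first coordinate of $M^N v_j$. The characteristic polynomial of $M$ is $\phi(X)=X^3-X-1$. Modulo $2$ this becomes $X^3+X+1$, which is primitive over $\mathbb{F}_2$, so $X^7\equiv 1$ modulo $(2,\phi)$. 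By Cayley--Hamilton, $M^7=I+2B$ for some integer matrix $B$.

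\textbf{Step 1 (base case $t=1$).} Square once:
$$
M^{14}=(I+2B)^2=I+4(B+B^2).
$$
Put $B':=B+B^2$, so $M^{14}=I+4B'$ exactly.

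\textbf{Step 2 (induction on $t$).} I will prove that $M^{7\cdot 2^t}=I+2^{t+1}B'+2^{t+2}C_t$ for some integer matrix $C_t$, for all $t\ge1$. Squaring this expression gives
$$
I+2^{t+2}B'+2^{t+3}(\cdots)+2^{2t+2}(\cdots).
$$
Since $2t+2\ge t+3$ when $t\ge1$, this is $\equiv I+2^{t+2}B'\pmod{2^{t+3}}$, which closes the induction. The binomial theorem then gives
$$
M^{7\cdot 2^t k}\equiv I+k\,2^{t+1}B'\pmod{2^{t+2}}
$$
for $k\ge1$, because the higher binomial terms carry at least $2^{2t+2}$.

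\textbf{Step 3.} From Step 2,
$$
R_{7\cdot 2^tk+j}\equiv R_j+k\,2^{t+1}\,\big(e_1^{\top}B'v_j\big)\pmod{2^{t+2}}.
$$
So it only remains to decide the parity of $\ell(j):=e_1^{\top}B'v_j$. The map $j\mapsto\ell(j)\bmod 2$ is a fixed $\mathbb{F}_2$-linear functional applied to $(R_j,R_{j+1},R_{j+2})\bmod 2$. Since $R_n\bmod 2$ has period $7$ (namely $1,0,0,1,0,1,1$), $\ell(j)\bmod 2$ also depends only on $j\bmod 7$. I then need to check the seven residues and confirm that $\ell(j)$ is even exactly when $j\equiv0,2,6\pmod 7$.

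The main obstacle is the explicit computation of $B'\bmod 2$, i.e.\ of $(M^{14}-I)/4 \bmod 2$, and the seven parity checks. I would do this either by computing $M^7$ and $M^{14}$ directly, or, equivalently, by reducing $X^{14}-1$ modulo $\phi$ over $\mathbb{Z}/8$.

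A simpler check is also available. By Step 3, the claim for $t=1$, $k=1$ is exactly
$$
R_{14+j}\equiv R_j \text{ or } R_j+4\pmod 8,
$$
and since $\ell(j)\bmod 2$ depends only on $j\bmod 7$, it suffices to verify $(R_{14+j}-R_j)/4\bmod 2$ for $j=0,\dots,6$. These are small numbers (for example $R_{14}=39$, $R_0=3$, and $39-3=36=4\cdot 9$ is $\equiv 4\pmod 8$), so $\ell(0)$ would be odd. That does not match the expected pattern, so I will recheck the definition of $\ell$ or the normalisation of indices carefully at this point.
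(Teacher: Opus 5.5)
Your Steps 1--3 are sound. $M^7=I+2B$ with $B=M^2+M$; this is $\alpha^7=1+2(\alpha^2+\alpha)$ combined with Cayley--Hamilton. The squaring induction works because $2t+2\ge t+3$ for $t\ge 1$. The binomial step holds for every $k\ge 1$. Since $Mv_j=v_{j+1}$ for all $j\in\mathbb{Z}$, negative $j$ are also covered, which the paper needs (it uses $j=-2$ in Proposition~\ref{prop:Rn}).

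The gap is that you stop exactly where the content of the lemma lies. The split ``$j\equiv 0,2,6$ versus otherwise'' is precisely the parity of $\ell(j)$, and you never determine it. Worse, the one value you compute is wrong: $R_{14}=51$, as recorded in Lemma~\ref{lem:doubling}(i), while $39$ is $R_{13}$. With the correct value, $R_{14}-R_0=48\equiv 0\pmod 8$, so $\ell(0)$ is even and there is no mismatch. As written, the proposal ends in a spurious contradiction and so does not establish the statement. It is your arithmetic, not your set-up, that needs rechecking.

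To close the argument, note that $M^{14}=I+4B'$ holds exactly, so $R_{14+j}-R_j=4\ell(j)$ exactly. For $j=0,\dots,6$ this gives $R_{14+j}-R_j=48,68,88,116,156,204,272$, i.e.\ $\ell(j)=12,17,22,29,39,51,68$. These are even precisely for $j=0,2,6$.

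A cleaner route is to reduce with $M^3=M+I$:
\[
B'=B+B^2=3M^2+4M+2I=M^9 .
\]
This is the identity $\alpha^{14}=1+2^2(3\alpha^2+4\alpha+2)$ from Case~6 of Proposition~\ref{prop:Pn}, since $\alpha^9=3\alpha^2+4\alpha+2$. Hence $\ell(j)=R_{j+9}$, and your Step~3 becomes
\[
R_{7\cdot 2^t k+j}\equiv R_j+k\,2^{t+1}R_{j+9}\pmod{2^{t+2}}.
\]
$R_n$ is even exactly when $n\equiv 1,2,4\pmod 7$, by Lemma~\ref{L3}, extended to all $n\in\mathbb{Z}$ via the period-$7$ parity pattern. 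So $R_{j+9}$ is even exactly when $j\equiv 6,0,2\pmod 7$, which is the lemma.

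The paper does not prove this statement itself; it quotes it from Bravo and Irmak. With this completion, your matrix-squaring argument is a self-contained proof in the same $2$-adic spirit as the paper's Case~6 computation.
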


The following lemma provides doubling recurrences for the 
Perrin numbers at indices $7\cdot 2^k$ and $-7\cdot 2^k$, 
which will be essential in the computational reduction step.

\begin{lemma}\label{lem:doubling}
	The following hold.
	\begin{itemize}
		\item[\rm(i)] $R_7 = 7$, $R_{14} = 51$, $R_{28} = 2627$, 
		$R_{-7} = -1$, $R_{-14} = -13$, $R_{-28} = 67$.
		\item[\rm(ii)] For all $k \ge 2$,
		\begin{equation*}
			R_{7\cdot 2^{k+1}} = R_{7\cdot 2^k}^2 - 2R_{-7\cdot 2^k}.
		\end{equation*}
		\item[\rm(iii)] For all $k \ge 2$,
		\begin{equation*}
			R_{-7\cdot 2^{k+1}} = R_{-7\cdot 2^k}^2 - 2R_{7\cdot 2^k}.
		\end{equation*}
	\end{itemize}
\end{lemma}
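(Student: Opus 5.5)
Part (i) is a finite computation. From $R_0=3$, $R_1=0$, $R_2=2$, the recurrence $R_{n+3}=R_{n+1}+R_n$ gives $R_7=7$, $R_{14}=51$ and $R_{28}=2627$. Running it backwards as $R_{n}=R_{n+3}-R_{n+1}$ gives $R_{-7}=-1$, $R_{-14}=-13$ and $R_{-28}=67$. Alternatively, once (ii) and (iii) are established for all $k\ge 0$, the values at $\pm14$ and $\pm28$ follow from those at $\pm7$: $R_{14}=49-2(-1)=51$, $R_{-14}=1-14=-13$, $R_{28}=51^2-2(-13)=2627$, and $R_{-28}=169-102=67$. This serves as a consistency check.

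For (ii) and (iii) I will use the Binet formula of Lemma~\ref{eq:binet_perrin}, which holds for all $n\in\mathbb{Z}$. Write $m=7\cdot 2^k$ and square $R_m=\alpha^m+\beta^m+\gamma^m$:
$$
R_m^2=\alpha^{2m}+\beta^{2m}+\gamma^{2m}+2\bigl((\alpha\beta)^m+(\beta\gamma)^m+(\gamma\alpha)^m\bigr).
$$
Vieta gives $\alpha\beta\gamma=1$, so $\alpha\beta=\gamma^{-1}$, $\beta\gamma=\alpha^{-1}$ and $\gamma\alpha=\beta^{-1}$. The bracket is therefore $\alpha^{-m}+\beta^{-m}+\gamma^{-m}=R_{-m}$. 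This yields $R_{2m}=R_m^2-2R_{-m}$, which is (ii).

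Replacing $m$ by $-m$ in the same computation gives $R_{-2m}=R_{-m}^2-2R_{m}$, which is (iii). The identity actually holds for every integer $m$, so the restriction $k\ge2$ in the statement is harmless. The only point needing care is that the Binet formula is valid for negative indices, and Lemma~\ref{eq:binet_perrin} already proved this. I expect no real obstacle here.
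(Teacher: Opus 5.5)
Your proposal is correct and follows essentially the same route as the paper. You square $R_m=\alpha^m+\beta^m+\gamma^m$ and use $\alpha\beta\gamma=1$ to identify the cross terms with $R_{-m}$ (and with $R_m$ when $m$ is replaced by $-m$); the only differences are cosmetic: you compute part (i) by running the recurrence forward and backward, and you note that the identity holds for every integer $m$, which the paper's argument also gives without using $k\ge 2$.
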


\begin{proof}
	Part (i) follows by direct computation from Lemma \ref{eq:binet_perrin}. For part (ii), 
	we use Lemma \ref{eq:binet_perrin} to expand
	\begin{align*}
		R_{7\cdot 2^k}^2 
		&= \big(\alpha^{7\cdot 2^k} + \beta^{7\cdot 2^k} 
		+ \gamma^{7\cdot 2^k}\big)^2\\
		&= \alpha^{7\cdot 2^{k+1}} + \beta^{7\cdot 2^{k+1}} 
		+ \gamma^{7\cdot 2^{k+1}}
		 + 2\big[(\alpha\beta)^{7\cdot 2^k} 
		+ (\alpha\gamma)^{7\cdot 2^k} 
		+ (\beta\gamma)^{7\cdot 2^k}\big].
	\end{align*}
	Since $\alpha\beta\gamma = 1$ by Vieta's formulas applied 
	to $\phi(X)$, we have 
	$\alpha\beta = \gamma^{-1}$, $\alpha\gamma = \beta^{-1}$, 
	and $\beta\gamma = \alpha^{-1}$. Therefore,
	\begin{equation*}
		(\alpha\beta)^{7\cdot 2^k} + (\alpha\gamma)^{7\cdot 2^k} 
		+ (\beta\gamma)^{7\cdot 2^k} 
		= \alpha^{-7\cdot 2^k} + \beta^{-7\cdot 2^k} 
		+ \gamma^{-7\cdot 2^k} = R_{-7\cdot 2^k}.
	\end{equation*}
	Substituting back gives
	\begin{equation*}
		R_{7\cdot 2^k}^2 = R_{7\cdot 2^{k+1}} + 2R_{-7\cdot 2^k},
	\end{equation*}
	from which part (ii) follows immediately. 
	For part (iii), we expand $R_{-7\cdot 2^k}^2$ using Lemma \ref{eq:binet_perrin} as
	\begin{align*}
		R_{-7\cdot 2^k}^2 
		&= \big(\alpha^{-7\cdot 2^k} + \beta^{-7\cdot 2^k} 
		+ \gamma^{-7\cdot 2^k}\big)^2\\
		&= \alpha^{-7\cdot 2^{k+1}} + \beta^{-7\cdot 2^{k+1}} 
		+ \gamma^{-7\cdot 2^{k+1}} + 2\big[(\alpha\beta)^{-7\cdot 2^k} 
		+ (\alpha\gamma)^{-7\cdot 2^k} 
		+ (\beta\gamma)^{-7\cdot 2^k}\big].
	\end{align*}
	The first three terms give $R_{-7\cdot 2^{k+1}}$. For the 
	remaining terms, since $\alpha\beta\gamma=1$, we have 
	$(\alpha\beta)^{-1}=\gamma$, $(\alpha\gamma)^{-1}=\beta$, 
	and $(\beta\gamma)^{-1}=\alpha$. Therefore,
	\begin{equation*}
		(\alpha\beta)^{-7\cdot 2^k} + (\alpha\gamma)^{-7\cdot 2^k} 
		+ (\beta\gamma)^{-7\cdot 2^k} 
		= \gamma^{7\cdot 2^k} + \beta^{7\cdot 2^k} 
		+ \alpha^{7\cdot 2^k} = R_{7\cdot 2^k}.
	\end{equation*}
	Substituting back gives
	\begin{equation*}
		R_{-7\cdot 2^k}^2 = R_{-7\cdot 2^{k+1}} + 2R_{7\cdot 2^k},
	\end{equation*}
	from which part (iii) follows immediately.
\end{proof}

Further, the following lemma provides index-shifting recurrences for any sequence satisfying the Padovan recurrence, and will be applied to both $\{P_n\}_{n\ge 0}$ and $\{R_n\}_{n\ge 0}$ in the reduction steps of the proofs of Theorems \ref{thm:1} and \ref{thm:2}.
\begin{lemma}\label{lem:shift}
	Let $\{U_n\}_{n\in\mathbb{Z}}$ be any sequence satisfying 
	the recurrence $U_{n+3} = U_{n+1} + U_n$, and let $j$ be 
	any integer. Then for all $k \ge 1$,
	\begin{itemize}
		\item[\rm(i)]
		\begin{equation*}
			U_{j-7\cdot 2^{k+1}} = -U_j R_{7\cdot 2^k} 
			+ U_{j+7\cdot 2^k} + U_{j-7\cdot 2^k}R_{-7\cdot 2^k},
		\end{equation*}
		\item[\rm(ii)]
		\begin{equation*}
			U_{j+7\cdot 2^{k+1}} = -U_j R_{-7\cdot 2^k} 
			+ U_{j-7\cdot 2^k} + U_{j+7\cdot 2^k}R_{7\cdot 2^k}.
		\end{equation*}
	\end{itemize}
\end{lemma}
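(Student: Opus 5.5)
The approach is to treat any sequence satisfying the recurrence as an element of the three-dimensional space spanned by the sequences $n\mapsto\alpha^n$, $n\mapsto\beta^n$, $n\mapsto\gamma^n$. Since the zeros of $\phi(X)$ are distinct, there are constants $a,b,c$ (depending on the initial values) with
\[
U_n=a\alpha^n+b\beta^n+c\gamma^n\qquad\text{for all } n\in\mathbb{Z}.
\]
This holds for negative $n$ as well, because the recurrence can be run backwards ($U_n=U_{n+3}-U_{n+1}$) and both sides obey it. Both identities are linear in $U$, so it suffices to verify them for each of the three geometric sequences $U_n=x^n$ with $x\in\{\alpha,\beta,\gamma\}$. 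Put $m=7\cdot 2^k$; only the structure $2m=7\cdot 2^{k+1}$ matters.

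For (i), with $U_n=x^n$, divide by $x^{j}$. The claim becomes
\[
x^{-2m}=-R_m+x^{m}+x^{-m}R_{-m}.
\]
I will verify this from the Binet formula of Lemma \ref{eq:binet_perrin}. Take $x=\alpha$ and use $\beta\gamma=\alpha^{-1}$ from Vieta.
\begin{align*}
-R_m+\alpha^m+\alpha^{-m}R_{-m}
&=-(\beta^m+\gamma^m)+\alpha^{-m}\bigl(\alpha^{-m}+\beta^{-m}+\gamma^{-m}\bigr)\\
&=-(\beta^m+\gamma^m)+\alpha^{-2m}+(\alpha\beta)^{-m}+(\alpha\gamma)^{-m}.
\end{align*}
Since $\alpha\beta\gamma=1$, we have $(\alpha\beta)^{-1}=\gamma$ and $(\alpha\gamma)^{-1}=\beta$. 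The last two terms are therefore $\gamma^m+\beta^m$, which cancel $-(\beta^m+\gamma^m)$, leaving $\alpha^{-2m}$ as required. The computation for $x=\beta,\gamma$ is identical by symmetry.

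Part (ii) is the same computation with $m$ replaced by $-m$. Equivalently, for $x=\alpha$,
\[
-R_{-m}+\alpha^{-m}+\alpha^{m}R_m=-(\beta^{-m}+\gamma^{-m})+\alpha^{2m}+(\alpha\beta)^m+(\alpha\gamma)^m=\alpha^{2m},
\]
using $(\alpha\beta)^m=\gamma^{-m}$ and $(\alpha\gamma)^m=\beta^{-m}$.

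The argument only uses that $m$ is an integer, so the hypothesis $k\ge1$ is harmless. The only point needing care is justifying the three-term Binet representation of an arbitrary $U$ on all of $\mathbb{Z}$, including negative indices. That follows from the distinct roots of $\phi(X)$ and the backward uniqueness of the recurrence, exactly as in the proof of Lemma \ref{eq:binet_perrin}. Nothing beyond bookkeeping is expected to be an obstacle.
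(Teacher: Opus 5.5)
Your proof is correct and follows essentially the same route as the paper: both write $U_n=a\alpha^n+b\beta^n+c\gamma^n$ and use $\alpha\beta\gamma=1$ to turn the cross terms $(\alpha\beta)^{\pm m}$, $(\alpha\gamma)^{\pm m}$ into $\gamma^{\mp m}$, $\beta^{\mp m}$. The only difference is organizational: you first reduce by linearity to the geometric sequences $x^n$ and check each root, whereas the paper expands $U_jR_{\pm 7\cdot 2^k}$ with general coefficients $a,b,c$ and then regroups.
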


\begin{proof}
	Let the Binet formula of $\{U_n\}_{n\in\mathbb{Z}}$ be
	\begin{equation*}
		U_n = a\alpha^n + b\beta^n + c\gamma^n,
	\end{equation*}
	for some $a, b, c \in \mathbb{Q}(\alpha,\beta)$.
	We prove part (i) by expanding $U_j R_{7\cdot 2^k}$ using both Binet formulas as follows:
	\begin{align*}
		U_j R_{7\cdot 2^k} 
		&= \left(a\alpha^j + b\beta^j + c\gamma^j\right)
		\big(\alpha^{7\cdot 2^k} + \beta^{7\cdot 2^k} 
		+ \gamma^{7\cdot 2^k}\big)\\
		&= \big(a\alpha^{j+7\cdot 2^k} + b\beta^{j+7\cdot 2^k} 
		+ c\gamma^{j+7\cdot 2^k}\big)
		 + a\alpha^{j-7\cdot 2^k}
		\big[(\alpha\beta)^{7\cdot 2^k} 
		+ (\alpha\gamma)^{7\cdot 2^k}\big]\\
		&\quad + b\beta^{j-7\cdot 2^k}
		\big[(\beta\alpha)^{7\cdot 2^k} 
		+ (\beta\gamma)^{7\cdot 2^k}\big]
		 + c\gamma^{j-7\cdot 2^k}
		\big[(\gamma\alpha)^{7\cdot 2^k} 
		+ (\gamma\beta)^{7\cdot 2^k}\big].
	\end{align*}
	Since $\alpha\beta\gamma = 1$, we have $(\alpha\beta)^{7\cdot 2^k} 
	= \gamma^{-7\cdot 2^k}$, $(\alpha\gamma)^{7\cdot 2^k} 
	= \beta^{-7\cdot 2^k}$, and $(\beta\gamma)^{7\cdot 2^k} 
	= \alpha^{-7\cdot 2^k}$. Substituting these relations and 
	regrouping, we obtain
	\begin{align*}
		U_j R_{7\cdot 2^k} 
		&= U_{j+7\cdot 2^k} 
		+ \big(a\alpha^{j-7\cdot 2^k} 
		+ b\beta^{j-7\cdot 2^k} 
		+ c\gamma^{j-7\cdot 2^k}\big)
		\big(\alpha^{-7\cdot 2^k} + \beta^{-7\cdot 2^k} 
		+ \gamma^{-7\cdot 2^k}\big)\\
		&\quad - \big(a\alpha^{j-7\cdot 2^{k+1}} 
		+ b\beta^{j-7\cdot 2^{k+1}} 
		+ c\gamma^{j-7\cdot 2^{k+1}}\big)\\
		&= U_{j+7\cdot 2^k} + U_{j-7\cdot 2^k}R_{-7\cdot 2^k} 
		- U_{j-7\cdot 2^{k+1}},
	\end{align*}
	from which part (i) follows immediately. For part (ii), we expand $U_j R_{-7\cdot 2^k}$ using 
	both Binet formulas as follows:
	\begin{align*}
		U_j R_{-7\cdot 2^k} 
		&= \left(a\alpha^j + b\beta^j + c\gamma^j\right)
		\big(\alpha^{-7\cdot 2^k} + \beta^{-7\cdot 2^k} 
		+ \gamma^{-7\cdot 2^k}\big)\\
		&= \big(a\alpha^{j-7\cdot 2^k} + b\beta^{j-7\cdot 2^k} 
		+ c\gamma^{j-7\cdot 2^k}\big)
		 + a\alpha^{j+7\cdot 2^k}
		\big[(\alpha\beta)^{-7\cdot 2^k} 
		+ (\alpha\gamma)^{-7\cdot 2^k}\big]\\
		&\quad + b\beta^{j+7\cdot 2^k}
		\big[(\beta\alpha)^{-7\cdot 2^k} 
		+ (\beta\gamma)^{-7\cdot 2^k}\big]
		 + c\gamma^{j+7\cdot 2^k}
		\big[(\gamma\alpha)^{-7\cdot 2^k} 
		+ (\gamma\beta)^{-7\cdot 2^k}\big].
	\end{align*}
	Since $\alpha\beta\gamma = 1$, we have 
	$(\alpha\beta)^{-7\cdot 2^k} = \gamma^{7\cdot 2^k}$, 
	$(\alpha\gamma)^{-7\cdot 2^k} = \beta^{7\cdot 2^k}$, 
	and $(\beta\gamma)^{-7\cdot 2^k} = \alpha^{7\cdot 2^k}$. 
	Substituting these relations and regrouping, we obtain
	\begin{align*}
		U_j R_{-7\cdot 2^k} 
		&= U_{j-7\cdot 2^k}
		 + \big(a\alpha^{j+7\cdot 2^k} 
		+ b\beta^{j+7\cdot 2^k} 
		+ c\gamma^{j+7\cdot 2^k}\big)
		\big(\alpha^{7\cdot 2^k} + \beta^{7\cdot 2^k} 
		+ \gamma^{7\cdot 2^k}\big)\\
		&\quad - \big(a\alpha^{j+7\cdot 2^{k+1}} 
		+ b\beta^{j+7\cdot 2^{k+1}} 
		+ c\gamma^{j+7\cdot 2^{k+1}}\big)\\
		&= U_{j-7\cdot 2^k} + U_{j+7\cdot 2^k}R_{7\cdot 2^k} 
		- U_{j+7\cdot 2^{k+1}},
	\end{align*}
	from which part (ii) follows immediately.
\end{proof}

\subsection{The $2$-adic valuation of $\{P_n+1\}_{n\ge 0}$ and $\{R_n-1\}_{n\ge 0}$}\label{sec:2adic}

In this section, we establish explicit formulas for the $2$-adic valuations of $\{P_n+1\}_{n\ge 0}$ and $\{R_n-1\}_{n\ge 0}$ except for a residue class of $n$ modulo $112$ for $\{P_n+1\}_{n\ge 0}$ and two residue classes of $n$ modulo $14$ for $\{R_n-1\}_{n\ge 0}$. For these exceptional classes, we establish a bound on the corresponding $2$-adic valuations of the above numbers for $n<2.3\times 10^{15}$, a bound that will be obtained later by applying Baker's theory to our equations. These are the fundamental tools used in the proofs of Theorems~\ref{thm:1} and~\ref{thm:2} to reduce the large bounds on $m$ and $n$ obtained from Baker's theory to constants small enough to allow for computational verification. Let's start with the shifted Padovan sequence.

\begin{proposition}\label{prop:Pn}
	For $n\ge 0$ such that $n\not\equiv 50\pmod{112}$, we have
	\begin{equation*}
		\nu_{2}(P_{n}+1)= 
		\begin{cases}
			0, & \text{if } n\equiv 3,4,6\pmod{7},\\
		\nu_{2}(n+9)+1, & \text{if } n\equiv 5\pmod{7},\\
			1, & \text{if } n\equiv 0,1.2,7,9\pmod{14},\\
			3, & \text{if } n\equiv 8\pmod{28},\\
			5, & \text{if } n\equiv 22\pmod{56},\\
			\nu_{2}(n+6)+4, & \text{if } n\equiv 106\pmod{112}.
		\end{cases}
	\end{equation*}
	Moreover, if $n\equiv 50\pmod{112}$ and $n<2.3\times 10^{15}$, 
	then $\nu_{2}(P_{n}+1)\le 56$.
\end{proposition}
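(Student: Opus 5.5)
The plan is to work residue class by residue class, using the periodicity and lifting statement of Lemma~\ref{L4}. First, reduce $P_n$ modulo $2$, $4$ and $8$. The Padovan sequence modulo $2$ has period $7$, with values $1,1,1,0,0,1,0$ for $n\equiv 0,\dots,6 \pmod 7$. Hence $P_n+1$ is odd exactly when $n\equiv 3,4,6\pmod 7$, which gives the first line. Modulo $4$ the period is $14$, and a direct check of $P_0,\dots,P_{13}$ modulo $4$ gives $\nu_2(P_n+1)=1$ for $n\equiv 0,1,2,7,9\pmod{14}$.

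The remaining classes are $n\equiv 5\pmod 7$ and $n\equiv 8\pmod{14}$. These are handled by successive refinement: modulo $2^{s}$ the period is $7\cdot 2^{s-1}$, so the value of $P_n$ is read off from finitely many initial terms. This settles $n\equiv 8\pmod{28}$ (valuation $3$) and $n\equiv 22\pmod{56}$ (valuation $5$). The classes that are not resolved split into a \emph{linearly growing} branch and the exceptional class $50\pmod{112}$.

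For the linearly growing classes, $n\equiv 5\pmod 7$ and $n\equiv 106\pmod{112}$, I would prove the formula by induction on $t$ using Lemma~\ref{L4}. Write $n=7\cdot 2^{t}k+j$ with a fixed base index $j$ such that $P_j=-1$ (or $P_j+1$ highly divisible by $2$). Natural choices are $j=-9$ for the class $5\pmod 7$ and $j=-6$ for the class $106\pmod{112}$, via negative indices. Both $-9\equiv 5$ and $-6\equiv 1$ modulo $7$ are classes where Lemma~\ref{L4} carries the term $k\cdot 2^{t+1}$ or not, and one must check which case applies. Since $P_{-9}$ and $P_{-6}$ can be computed from the backward recurrence, $P_{j}+1$ has a known valuation. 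Then Lemma~\ref{L4} gives
\[
P_n+1\equiv P_j+1+k\cdot 2^{t+1}\pmod{2^{t+2}}.
\]
Choosing $t=\nu_2(n-j)-\nu_2(7)$ with $k$ odd forces $\nu_2(P_n+1)=t+1=\nu_2(n+9)+1$ in the first case, provided $\nu_2(P_j+1)\ge t+2$, which holds if $P_j=-1$.

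For the class $106\pmod{112}$, $\nu_2(P_j+1)$ is finite, so I expect one has to reinterpret the base point. The fallback is the identity of Lemma~\ref{lem:shift}, which expresses $P_{j+7\cdot 2^{k+1}}$ through terms with valuation already known from Lemma~\ref{L2} and Lemma~\ref{L3}. The $+4$ offset should arise from $\nu_2$ of a fixed factor such as $R_{7\cdot2^k}$ or of $P_j+1$ at a base index $j\equiv -6$.

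The exceptional class $n\equiv 50\pmod{112}$ is the one where no nearby index $j$ makes $P_j+1$ vanish. There the $2$-adic limit of $P_n+1$ along the class is a $2$-adic zero that is not a rational integer, so no closed formula exists. For this class I would compute that zero modulo $2^{57}$ explicitly. Using the doubling recurrences of Lemma~\ref{lem:doubling} together with Lemma~\ref{lem:shift}, one can evaluate $P_n \bmod 2^{57}$ in $O(\log n)$ steps, which in effect builds the $2$-adic digits of the root $x$ one at a time. One then checks that the unique residue $n_0 \bmod 7\cdot 2^{55}$ with $2^{57}\mid P_{n_0}+1$ satisfies $n_0\ge 2.3\times 10^{15}$. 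Since $7\cdot 2^{55}$ vastly exceeds $2.3\times10^{15}$, no $n$ below the bound lies in that residue class, so $\nu_2(P_n+1)\le 56$.

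I expect the main obstacle to be this computational lifting step: carrying out the Hensel-type digit-by-digit search reliably. A secondary obstacle is pinning down the exact base index and offset that produce $\nu_2(n+6)+4$ for the class $106 \pmod{112}$.
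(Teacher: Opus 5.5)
Your treatment of the bounded classes and of $n\equiv 5\pmod 7$ is fine. For the bounded classes, a finite check of $P_n \bmod 2^s$ is a legitimate substitute for the paper's inductions via Lemma~\ref{L1}, since that sequence has period dividing $7\cdot 2^{s-1}$ because $\alpha^7=1+2(\alpha^2+\alpha)$.

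The genuine gap is the class $n\equiv 106\pmod{112}$: you give no argument for it, and your diagnosis of the difficulty is off. With $j=-6$ one has $P_{-6}=-1$, so $\nu_2(P_j+1)=\infty$, not finite. The real obstruction is that $-6\equiv 1\pmod 7$ lies in the branch of Lemma~\ref{L4} without the term $k\cdot 2^{t+1}$. Writing $n=7\cdot 2^t k-6$ with $k$ odd, Lemma~\ref{L4} only gives $P_n\equiv -1\pmod{2^{t+2}}$, i.e.\ the lower bound $\nu_2(P_n+1)\ge \nu_2(n+6)+2$.

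None of your proposed workarounds closes this:
\begin{itemize}
\item Reinterpreting the base point cannot help, because every $j$ with $7\mid n-j$ is again $\equiv 1\pmod 7$.
\item Lemma~\ref{lem:shift} together with Lemmas~\ref{L2} and~\ref{L3} does not help either. Those lemmas control $\nu_2(P_n)$ and $\nu_2(R_n)$, not the high $2$-adic digits of $P_n+1$.
\item $R_{7\cdot 2^k}$ is always odd, so it cannot produce the offset.
\item No finite check suffices, since the valuation is unbounded on this class.
\end{itemize}
What is needed is $P_n+1$ modulo $2^{t+5}$, three binary digits beyond Lemma~\ref{L4}, uniformly in $t$.

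The missing ingredient is a second-order $2$-adic expansion around the integer zero $-6$. The paper computes explicitly $\alpha^{112}=1+2^5 f(\alpha)$ with $f\in\mathbb{Z}[X]$ of degree $2$. It raises this to the power $2^t k$ and shows that all binomial terms with $s\ge 2$ have valuation at least $t+9$. Hence $\alpha^{7\cdot 2^{t+4}k}\equiv 1+2^{t+5}kf(\alpha)\pmod{2^{t+9}}$, and likewise for $\beta,\gamma$. Inserting this into $P_n+1=\sum_{x\in\{\alpha,\beta,\gamma\}}c_x x^{-6}(x^{n+6}-1)$ yields $P_n+1\equiv 2^{t+5}kM\pmod{2^{t+9}}$. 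Here $M$ is an explicit integer combination of $P_{-4},P_{-5},P_{-6}$ with $\nu_2(M)=3$, so $\nu_2(P_n+1)=t+8=\nu_2(n+6)+4$. The factor $2^3$ in $M$ is exactly what Lemma~\ref{L4} cannot see.

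For the class $50\pmod{112}$, your certificate is a sound and more transparent alternative to the paper's greedy search: find the residue classes on which $2^{57}\mid P_n+1$ and check that their least nonnegative representatives exceed $2.3\times 10^{15}$. Do not assume uniqueness modulo $7\cdot 2^{55}$, though. The paper's data are consistent with $\nu_2(P_{50+112k}+1)=8+\nu_2(k-k^{*})$ for the $2$-adic integer $k^{*}$ of Remark~\ref{rem:2adic-k}. That makes this set one class modulo $7\cdot 2^{53}$, hence four classes modulo $7\cdot 2^{55}$. A rigorous version should lift every surviving class at each level.
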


\begin{proof}
	First, we verify that the seven listed residue classes are 
	mutually exclusive and exhaustive. Every integer $n\ge 0$ 
	satisfies exactly one of the relations $n\equiv 0,1,2,3,4,5,6\pmod{7}$. 
	The class $n\equiv 1\pmod{7}$ is further refined by the 
	remainder of $n$ modulo higher powers of $2$ by $7$, 
	which yields the subcases modulo $14$, $28$, $56$, and $112$. 
	It can be directly verified that the union of the seven classes 
	covers all non-negative integers exactly once.
	
	Now let's look at each case separately.
	
	\medskip
	\noindent\textbf{Case 1.} $n\equiv 3,4,6\pmod{7}$.
	
	By Lemma \ref{L2}, $\nu_2(P_n)\ge 1$ for $n$ in each of 
	these residue classes, so $P_n$ is even. Therefore, $P_n+1$ 
	is odd, which implies that $\nu_2(P_n+1)=0$.
	
		\medskip
	\noindent\textbf{Case 2.} $n\equiv 5\pmod{7}$.
	
	Let $n=7s+5$ for some integer $s\ge 0$. Then, $n+9=7(s+2)$. By the fundamental theorem of arithmetic, $s+2$ can be uniquely expressed as $2^{t}k$ for some integer $t\ge 0$ and some odd $k$. Therefore, every $n$ in this residue 
	class can be uniquely expressed as $n=7\cdot 2^t k-9$ for 
	some $t\ge 0$ and some odd $k$. Since 
	$j=-9\equiv 5\pmod{7}$, it follows from Lemma \ref{L4} that
	\begin{equation*}
		P_{7\cdot 2^t k-9}\equiv P_{-9}+k\cdot 2^{t+1}
		\pmod{2^{t+2}}.
	\end{equation*}
	It follows from Lemma \ref{eq:binet_padovan} that $P_{-9}=c_{\alpha}\alpha^{-9}+c_{\beta}\beta^{-9}+c_{\gamma}\gamma^{-9}=-1$. Therefore,
	\begin{equation*}
		P_n+1\equiv k\cdot 2^{t+1}\pmod{2^{t+2}}.
	\end{equation*}
	Since $k$ is odd, it follows that $\nu_2(P_n+1)=t+1$. 
	Finally, since $n+9=7\cdot 2^t k$ and $k$ is odd, we 
	have that $\nu_2(n+9)=t$, so $\nu_2(P_n+1)=\nu_2(n+9)+1$, as stated.
	
	\medskip
	\noindent\textbf{Case 3.} $n\equiv 0,1,2,7,9\pmod{14}$.
	
	We prove by weak induction that $P_n\equiv 1\pmod{4}$ for all such $n$, which implies that $P_{n}+1=4t+2$ for some integer $t\ge 0$ and so $\nu_2(P_n+1)=\nu_{2}(2(2t+1))=1$.
	
	\smallskip
	\textit{Subcase} $n\equiv 0,7\pmod{14}$. Let $n=7k$ for some integer 
	$k\ge 0$. The base case holds, since $P_0=1\equiv 1\pmod{4}$. 
	Suppose that $P_{7k}\equiv 1\pmod{4}$ for some fixed integer
	$k\ge 0$. Then, applying Lemma \ref{L1} with $r=7k+2$ and $s=5$, we obtain
	\begin{align*}
		P_{7(k+1)} = P_{(7k+2)+5} 
		&= P_{4}P_{7k+1} + P_{3}P_{7k+2} 
		+ P_{2}P_{7k}\\
		&= 2P_{7k+1}+2P_{7k+2}+P_{7k}\\
		&\equiv 2P_{7k+4}+1\pmod{4},
	\end{align*}
	where we used the fact that $P_2=1$ and $P_3=P_4=2$, and the recurrence relation of $\{P_{n}\}_{n}$. Since $7k+4\equiv 4\pmod{7}$, 
	Lemma~\ref{L2} states that $P_{7k+4}$ is even. Hence, $2P_{7k+4}\equiv 0\pmod{4}$. Therefore,
	\begin{equation*}
		P_{7(k+1)}\equiv 1\pmod{4}.
	\end{equation*}
	
	\smallskip
	\textit{Subcase} $n\equiv 2,9\pmod{14}$. Let $n=7k+2$ for some integer 
	$k\ge 0$. Since $P_2=1\equiv 1\pmod{4}$, the case base holds. 
	Suppose that $P_{7k+2}\equiv 1\pmod{4}$ for some fixed integer $k\ge 0$. Then, 
	applying Lemma~\ref{L1} with $r=7k+2$ and $s=7$, and 
	using the fact that $P_4=2$, $P_5=3$, and $P_6=4$, we obtain
	\begin{align*}
		P_{7(k+1)+2} = P_{(7k+2)+7} 
		&= P_{6}P_{7k+1}+P_{5}P_{7k+2}
		+P_{4}P_{7k}\\
		&= 4P_{7k+1}+3P_{7k+2}+2P_{7k}\\
		&\equiv 3+2P_{7k}\pmod{4}.
	\end{align*}
	Since $7k\equiv 0\pmod{7}$, Lemma~\ref{L2} states that $P_{7k}$ 
	is odd, so $2P_{7k}\equiv 2\pmod{4}$. Therefore,
	\begin{equation*}
		P_{7(k+1)+2}\equiv 1\pmod{4}.
	\end{equation*}
	
	\smallskip
	\textit{Subcase} $n\equiv 1\pmod{14}$. Let $n=14k+1$ for some integer $k\ge 0$. Since $P_1=1\equiv 1\pmod{4}$, the case base is satisfied. Suppose that $P_{14k+1}\equiv 1\pmod{4}$ for some fixed integer $k\ge 0$. Applying 
	Lemma~\ref{L1} with $r=14k+2$ and $s=13$, and using the fact that 
	$P_{10}=12$, $P_{11}=16$, and $P_{12}=21$, we obtain
	\begin{align*}
		P_{14(k+1)+1} = P_{(14k+2)+13}
		&= P_{12}P_{14k+1}+P_{11}P_{14k+2}
		+P_{10}P_{14k}\\
		&= 21P_{14k+1}+16P_{14k+2}+12P_{14k}\\
		&\equiv P_{14k+1}\equiv 1\pmod{4}.
	\end{align*}
	
	\medskip
	\noindent\textbf{Case 4.} $n\equiv 8\pmod{28}$.
	
	We will prove by weak induction that $P_n\equiv 7\pmod{16}$, which gives $P_{n}+1=16t+8$ for some integer $t\ge 0$, so 
	$\nu_2(P_n+1)=\nu_{2}(8(2t+1))=3$. Let $n=28k+8$ for an integer $k\ge 0$. Since $P_8=7\equiv 7\pmod{16}$, the case base holds. Suppose that $P_{28k+8}\equiv 7\pmod{16}$ for some fixed $k\ge 0$. Applying 
	Lemma~\ref{L1} with $r=28k+10$ and $s=26$, and using the fact that
	$P_{23}=465\equiv 1\pmod{16}$, $P_{24}=616\equiv 8\pmod{16}$, and $P_{25}=816\equiv 0\pmod{16}$, 
	we obtain
	\begin{align*}
		P_{28(k+1)+8} = P_{(28k+10)+26}
		&= P_{25}P_{28k+9}+P_{24}P_{28k+10}
		+P_{23}P_{28k+8}\\
		&\equiv 8P_{28k+10}+7\pmod{16}.
	\end{align*}
	Since $28k+10\equiv 3\pmod{7}$, Lemma~\ref{L2} gives 
	$\nu_2(P_{28k+10})=\nu_2(28k+14)+1=\nu_2(14(2k+1))+1=2$. Therefore, $P_{28k+10}=4q$ for 
	some odd $q$, and 
	$8P_{28k+10}=32q\equiv 0\pmod{16}$. Thus,
	\begin{equation*}
		P_{28(k+1)+8}\equiv 7\pmod{16}.
	\end{equation*}
	
	\medskip
	\noindent\textbf{Case 5.} $n\equiv 22\pmod{56}$.
	
	We will prove by induction that $P_n\equiv 31\pmod{64}$, so
	$\nu_2(P_n+1)=\nu_{2}(64t+32)=\nu_{2}(32(2t+1))=5$, where $t$ is a non-negative integer. Let $n=56k+22$ for an integer $k\ge 0$. Since $P_{22}=351\equiv 31\pmod{64}$,  the base case holds. Suppose that 
	$P_{56k+22}\equiv 31\pmod{64}$ for some fixed $k\ge 0$. Applying Lemma~\ref{L1} 
	with $r=56k+24$ and $s=54$, and using
	$P_{51}=1221537\equiv 33\pmod{64}$, $P_{52}=1618192\equiv 16\pmod{64}$, and $P_{53}=2143648\equiv 32\pmod{64}$, we obtain
	\begin{align*}
		P_{56(k+1)+22} = P_{(56k+24)+54}
		&= P_{53}P_{56k+23}+P_{52}P_{56k+24}
		+P_{51}P_{56k+22}\\
		&\equiv 32P_{56k+23}+16P_{56k+24}
		-1\pmod{64}.
	\end{align*}
Since $56k+23\equiv 2\pmod{7}$, Lemma~\ref{L2} gives that $P_{56k+23}$ is odd. Thus, $32P_{56k+23}\equiv 32\pmod{64}$. Since $56k+24\equiv 3\pmod{7}$, Lemma~\ref{L2} gives $\nu_2(P_{56k+24})=\nu_2(56k+28)+1=\nu_2(28(2k+1))+1=3$. Thus, $P_{56k+24}=8w$ for some odd $w$, and $16P_{56k+24}\equiv 0\pmod{64}$. Therefore,
	\begin{equation*}
		P_{56(k+1)+22}\equiv 31\pmod{64}.
	\end{equation*}
	
	\medskip
	\noindent\textbf{Case 6.} $n\equiv 106\pmod{112}$.
	
Let $n=112s+106$ for some integer $s\ge 0$. Then, $n+6=112(s+1)$. If $s\ge 1$, then, by the fundamental theorem of arithmetic, $s+1$ can be written uniquely as $2^{t}k$ for some integer $t\ge 0$ and some odd $k$. Thus, every $n$ in this class of residues can be uniquely expressed as $n=7\cdot 2^{t+4}k-6$ for some $t\ge 0$ and some odd $k$. First, from Lemma \ref{eq:binet_padovan} and the fact that $P_{-6}=-1$ (by Lemma \ref{eq:binet_padovan}), we have the identity
	\begin{equation}\label{eq:key-case7}
		P_n+1 = P_n - P_{-6} 
		= c_{\alpha}\alpha^{-6}(\alpha^{n+6}-1)
		+c_{\beta}\beta^{-6}(\beta^{n+6}-1)
		+c_{\gamma}\gamma^{-6}(\gamma^{n+6}-1).
	\end{equation}
	We now determine the 2-adic structure of 
	$\alpha^{n+6}-1=\alpha^{7\cdot 2^{t+4}k}-1$ 
	by iteratively squaring. For any polynomial 
	$Q(X)\in\mathbb{Z}[X]$, we compute $Q(\alpha)$ 
	by taking the remainder of the division by the 
	minimal polynomial $\phi(X)$. In this way, 
	we obtain the following chain of congruences:
	\begin{eqnarray}
	\label{eq:alpha}
		\alpha^7 
		&=& 2\alpha^2+2\alpha+1
		= 1+2(\alpha^{2}+\alpha),\nonumber\\
		\alpha^{7\cdot 2} 
		&=& 1+2^2(3\alpha^2+4\alpha+2),\\
		\alpha^{7\cdot 2^2} 
		&=& 1+2^3(77\alpha^2+102\alpha+58),\nonumber\\
		\alpha^{7\cdot 2^3} 
		&=& 1+2^4(101137\alpha^2+133978\alpha+76346),\nonumber\\
		\alpha^{7\cdot 2^4} 
		&=& 1+2^5 f(\alpha),\nonumber
	\end{eqnarray}
	where
	\begin{equation*}
		f(\alpha) := 348972965593\alpha^2
		+462290754114\alpha+263431897850.
	\end{equation*}
	Each step involves squaring the previous expression 
	and reducing it modulo $\phi(\alpha)=0$.
	
	Now, by raising $\alpha^{7\cdot 2^4}=1+2^5 f(\alpha)$ 
	to the power $2^tk$ using the binomial theorem, we obtain
	\begin{equation*}
		\alpha^{7\cdot 2^{t+4}k} 
		= \sum_{s=0}^{2^t k}\binom{2^t k}{s} 2^{5s}f(\alpha)^s.
	\end{equation*}
	For $s\ge 2$, Kummer's theorem gives 
	$\nu_2\binom{2^t}{s}=t-\nu_2(s)$, and since 
	$\nu_2(s)\le \log s/\log 2\le 5s-9$ for $s\ge 2$ 
	(since the function $h(s):=5s-9-\log s/\log 2$ vanishes 
	at $s=2$ and has a positive derivative $5-1/(s\log 2)>0$ 
	for all $s\ge 2$), 
	we obtain
	\begin{equation*}
		\nu_2\left(\binom{2^t}{s}2^{5s}\right) 
		\ge t-\nu_2(s)+5s \ge t+9\qquad {\text{\rm for}}\qquad s\ge 2.
	\end{equation*}
	Thus, 
	$$\alpha^{7\cdot 2^{t+4}}\equiv 1+2^{t+5}f(\alpha)\pmod {2^{t+9}}.
	$$
Since $k$ is odd, raising to the power $2^t k$ instead of $2^t$ yields the same congruence modulo $2^{t+9}$, since the additional factor of $k$ merely multiplies the leading term by the odd integer $k$ and leaves the higher-order terms still divisible by $2^{t+9}$. Therefore,
	\begin{equation}\label{eq:alpha-congruence}
		\alpha^{7\cdot 2^{t+4}k} 
		\equiv 1+2^{t+5}kf(\alpha)\pmod{2^{t+9}},
	\end{equation}
	and similarly for $\beta$ and $\gamma$. Substituting 
	\eqref{eq:alpha-congruence} into \eqref{eq:key-case7} 
	and using Lemma \ref{eq:binet_padovan} gives
	$$
		P_n+1 
		\equiv 2^{t+5}k\bigl(c_{\alpha}\alpha^{-6}f(\alpha)
		+c_{\beta}\beta^{-6}f(\beta)
		+c_{\gamma}\gamma^{-6}f(\gamma)\bigr)\equiv 2^{t+5}k\,M\pmod{2^{t+9}},
	$$
	where
	$$
		M := 348972965593P_{-4}
		+462290754114P_{-5}
		+263431897850P_{-6}.
	$$
	Here, we used the identity 
	$$c_{\alpha}\alpha^{-6}f(\alpha)+c_{\beta}\beta^{-6}f(\beta)
	+c_{\gamma}\gamma^{-6}f(\gamma)=348972965593P_{-4}+462290754114P_{-5}	+263431897850P_{-6},$$
 which follows from Lemma \ref{eq:binet_padovan}. Since $k$ is odd and 
 $$
 \nu_2(M)=\nu_2(198858856264)=\nu_{2}(2^3\cdot 1229\cdot 20225677)=3,
 $$ 
 we conclude that
	\begin{equation*}
		\nu_2(P_n+1) = t+5+\nu_2(k)+\nu_2(M) 
		= t+8.
	\end{equation*}
	Since $\nu_2(n+6)=\nu_2(7\cdot 2^{t+4}k)=t+4$, this gives
	\begin{equation*}
		\nu_2(P_n+1) = \nu_2(n+6)+4.
	\end{equation*}
	
	\medskip
	\noindent\textbf{An exceptional case.} $n\equiv 50\pmod{112}$.
	
	Unlike the previous cases, the residue class $n\equiv 50\pmod{112}$ does not admit 
	a uniform closed-form expression for $\nu_2(P_n+1)$. 
	Instead, we determine the maximum possible value of 
	$\nu_2(P_n+1)$ over all $n$ in this class with 
	$n<2.3\times 10^{15}$ using the following algorithm, which 
	is essentially based on Lemmas~\ref{lem:doubling} 
	and~\ref{lem:shift}.
	
	\medskip
	\noindent\textit{Algorithm.} Set $n_0=50$ and note that $\nu_2(P_{50}+1)=\nu_2(922112)=\nu_{2}(2^{9}\cdot 1801)=9$. At each step, given a current index 
	$n_r$, we compute $\nu_2(P_{n_r+7\cdot 2^{r+1}}+1)$ 
	using Lemma~\ref{lem:shift} applied with 
	$U_n=P_n$ modulo $2^{100}$, after first computing 
	$R_{7\cdot 2^k}\pmod{2^{100}}$ and 
	$R_{-7\cdot 2^k}\pmod{2^{100}}$ for 
	$k=0,1,\ldots,100$ using the doubling recurrence relations from 
	Lemma \ref{lem:doubling}. If adding $7\cdot 2^{r+1}$ 
	to the current index increases the value of $\nu_2(P_n+1)$, we set 
	$n_{r+1}=n_r+7\cdot 2^{r+1}$; otherwise we try 
	$7\cdot 2^{r+2}$, and so on. The algorithm terminates 
	when the next step would require an index greater than 
	$2.3\times 10^{15}$, which occurs approximately when
	$r=49$, since $7\cdot 2^{49}>2.3\times 10^{15}$. 
	The maximum value of $\nu_2(P_n+1)$ produced by this 
	algorithm is the constant $k_1$.
	
An implementation of this algorithm in SageMath yields 
\begin{equation*}
	k_{1}=\nu_{2}(P_{919832845308882}+1)= 56.
\end{equation*}
See Appendix \ref{app1} for more information. This completes the proof of Proposition \ref{prop:Pn}.
\end{proof}
\begin{remark}\label{rem:2adic-k}
The algorithm described in Case 8 of the proof of Proposition \ref{prop:Pn} also generates an explicit approximation of a non-rational $2$-adic zero $n_0$ of $\{P_n+1\}_{n\ge 0}$ with $n\equiv 50\pmod{112}$. Writing $n_0=50+7\cdot 2^{4}k$ where $k\in\mathbb{Z}_2$ is a $2$-adic integer, the algorithm computes $k$ modulo $2^{100}$ by recording which steps $7\cdot 2^r$ are accepted (contributing $2^{r-4}$ to $k$) and which are skipped. This gives
	\begin{equation*}
		k \equiv 1024997913023633590925131382
		\pmod{2^{100}},
	\end{equation*}
whose $2$-adic expansion (digits listed from least significant) is
$$
{{ 0 1 1 0 1 1 1 0 0 1 1 1 1 1 1 1 1 1 1 1 1 0 0 1 0 0 0 0 1 1 0 0 0 0 0 1 1 1 1 0 1 1 1 0 0 0 0 0 1 0 1 0 0 1 1 0 0 0 1 0 1 1 0 0 0 0 0 0 0 0 1 1 1 1 0 1 1 0 1 1 1 1 1 1 0 0 1 0 1 1_2.}}
$$
The corresponding $2$-adic zero satisfies
	\begin{equation*}
		n_0 \equiv 
		114799766258646962183614714834
		\pmod{7\cdot 2^{100}}.
	\end{equation*}
It can be verified by calculation that the number $n_0$ from the previous residue class satisfies $P_{n_0}+1\equiv 0\pmod{2^{100}}$, which confirms that $n_0$ is, in fact, a $2$-adic zero of the sequence $\{P_n+1\}_{n\ge 0}$.
\end{remark}

We will now move on to the shifted Perrin sequence.

\begin{proposition}\label{prop:Rn}
	For $n\ge 0$ such that $n\not\equiv 5,10\pmod{14}$, we have
	\begin{equation*}
		\nu_{2}(R_{n}-1)= 
		\begin{cases}
			0, & \text{if } n\equiv 1,2,4\pmod{7},\\
			1, & \text{if } n\equiv 0,3,7,13\pmod{14},\\
			2, & \text{if } n\equiv 6\pmod{14},\\
			\nu_{2}(n+2)+1, & \text{if } n\equiv 12\pmod{14}.
		\end{cases}
	\end{equation*}
	Moreover, if $n\equiv 5,10\pmod{14}$ and 
	$n<2.2\times 10^{15}$, then $\nu_{2}(R_{n}-1)\le 51$.
\end{proposition}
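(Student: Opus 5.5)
I will mirror the proof of Proposition \ref{prop:Pn}. First I check that the listed residue classes, together with the exceptional classes $n\equiv 5,10\pmod{14}$, partition the non-negative integers. The residues $1,2,4\pmod 7$ account for six classes mod $14$. The residues $0,3,7,13$ contribute four more, and $6,12$ two more, leaving exactly $5$ and $10$. I also record the small values $R_0,\dots,R_{14}=3,0,2,3,2,5,5,7,10,12,17,22,29,39,51$.

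\textbf{Odd case.} For $n\equiv 1,2,4\pmod 7$, Lemma \ref{L3} gives $\nu_2(R_n)\ge 1$. Hence $R_n-1$ is odd and $\nu_2(R_n-1)=0$.

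\textbf{Valuation $1$ or $2$.} For $n\equiv 0,3,7,13\pmod{14}$ I will show $R_n\equiv 3\pmod 4$, and for $n\equiv 6\pmod{14}$ I will show $R_n\equiv 5\pmod 8$. The natural tool is Lemma \ref{L5} with $t=1$: it gives $R_{14k+j}\equiv R_j$ or $R_j+4k\pmod 8$, so modulo $4$ the sequence is $14$-periodic. The checks are then $R_0=3$, $R_3=3$, $R_7=7$ and $R_{13}=39$, all $\equiv 3\pmod 4$.

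For $n\equiv 6$ the lemma applies with $j=6$, one of the classes $0,2,6\pmod 7$, so $R_{14k+6}\equiv R_6=5\pmod 8$. This yields valuation $2$.

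Should the lemma's range of $j$ be inconvenient, I will instead argue by induction using Lemma \ref{L1}, exactly as in Cases 3–5 of Proposition \ref{prop:Pn}.

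\textbf{The unbounded class $n\equiv 12\pmod{14}$.} I will write $n+2=7\cdot 2^tk$ with $k$ odd and $t\ge 1$, since $n+2\equiv 0\pmod{14}$. Then $j=-2\equiv 5\pmod 7$, which is in the ``otherwise'' case of Lemma \ref{L5}. That gives
$$R_n\equiv R_{-2}+k2^{t+1}\pmod{2^{t+2}}.$$
Since $R_{-2}=1$, we get $R_n-1\equiv k2^{t+1}\pmod{2^{t+2}}$, so $\nu_2(R_n-1)=t+1=\nu_2(n+2)+1$.

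\textbf{Exceptional classes.} This is the main obstacle, because no rational shift $j$ works; presumably a $2$-adic zero of $R_n-1$ lies in each of these classes. I will first try to refine the classes modulo $28,56,\dots$, as in Cases 4–6 of Proposition \ref{prop:Pn}, using the $\alpha^{7\cdot 2^r}$ expansions \eqref{eq:alpha}. Any subclass that reduces to a constant valuation or to a formula $\nu_2(n+c)+\text{const}$ is settled that way.

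For the residual $2$-adic-zero branch, I will run the greedy lifting algorithm. Start from $n_0\in\{5,10\}$, or a refined representative. Compute $R_{7\cdot 2^k}$ and $R_{-7\cdot 2^k}$ modulo $2^{100}$ via Lemma \ref{lem:doubling}. Then, using Lemma \ref{lem:shift} with $U=R$, successively add $7\cdot 2^{r}$ whenever this raises $\nu_2(R_n-1)$, stopping once indices exceed $2.2\times10^{15}$. Every $n$ in the class below that bound agrees with the constructed $2$-adic approximant only up to some $2$-adic precision. Its valuation is therefore bounded by the value attained at the last accepted step, and the computation should show that the maximum is $51$.
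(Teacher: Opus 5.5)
Your proof is correct. For the odd classes, for $n\equiv 12\pmod{14}$ (Lemma~\ref{L5} with $j=-2$ and $R_{-2}=1$), and for the exceptional classes (greedy lifting via Lemmas~\ref{lem:doubling} and~\ref{lem:shift}), it is the paper's argument.

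The genuine difference is in the classes of valuation $1$ or $2$. The paper proves $R_n\equiv 3\pmod 4$ for $n\equiv 0,7,13\pmod{14}$ and $R_n\equiv 5\pmod 8$ for $n\equiv 6\pmod{14}$ by separate inductions. These are built on the addition formula of Lemma~\ref{L1} and the parity information of Lemma~\ref{L3}. It handles $n\equiv 3\pmod{14}$ through the expansion $\alpha^{14}=1+2^2(3\alpha^2+4\alpha+2)$. You instead apply Lemma~\ref{L5} with $t=1$, which gives $R_{14k+j}\equiv R_j\pmod 4$ for every $j$ and $R_{14k+6}\equiv R_6\pmod 8$. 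Everything then reduces to reading off $R_0,\dots,R_{13}$. Your route is shorter and uniform. The paper's is independent of Lemma~\ref{L5} for these classes, but since that lemma is needed anyway for $n\equiv 12\pmod{14}$, this buys little.

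One point you share with the paper: the step ``its valuation is therefore bounded by the value attained at the last accepted step'' presupposes that, on each exceptional class, $\nu_2(R_n-1)$ depends only on the $2$-adic distance from $n$ to the zero. This is easy to justify, as follows.
\begin{itemize}
\item Since $X^3-X-1$ is irreducible modulo $2$, the roots $x\in\{\alpha,\beta,\gamma\}$ lie in the unramified cubic extension of $\mathbb{Q}_2$, and $x^{14}\equiv 1\pmod 4$.
\item Expanding $x^{14k}=\exp(k\log x^{14})$ gives $R_{j+14k}-1=\sum_{i\ge 0}b_ik^i$ with $b_i\in\mathbb{Q}_2$. Here $\nu_2(b_i)\ge 2i-\nu_2(i!)\ge i+1\ge 3$ for $i\ge 2$, and $b_1\equiv 4R_{j+2}\pmod 8$.
\item For $j=5,10$ we have $R_{j+2}\in\{7,29\}$ odd and $4\mid R_j-1$. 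Hence $k\mapsto (R_{j+14k}-1)/4$ is an isometry of $\mathbb{Z}_2$.
\item It follows that $\nu_2(R_{j+14k}-1)=2+\nu_2(k-k_0)$ for a unique $k_0\in\mathbb{Z}_2$.
\end{itemize}
This makes the greedy output a genuine maximum below $2.2\times 10^{15}$. It also shows your preliminary refinement modulo $28,56,\dots$ can be skipped: at each level it only splits off a constant-valuation half and never closes the branch containing $k_0$.
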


\begin{proof}
	We verify that the listed residue classes, 
	together with the exceptional classes 
	$n\equiv 5,10\pmod{14}$, are mutually exclusive and 
	cover all non-negative integers. Every $n\ge 0$ belongs 
	to exactly one class modulo $7$. The class 
	$n\equiv 1\pmod{7}$ (which contains both 
	$n\equiv 1\pmod{14}$ and $n\equiv 8\pmod{14}$) and 
	$n\equiv 5\pmod{7}$ (which contains 
	$n\equiv 5\pmod{14}$ and $n\equiv 12\pmod{14}$) are 
	refined by the residue modulo $14$, and it can be 
	directly verified that all the classes together cover each 
	non-negative integer exactly once.
	
	We handle each case individually.
	
	\medskip
	\noindent\textbf{Case 1.} $n\equiv 1,2,4\pmod{7}$.
	
	By Lemma~\ref{L3}, $\nu_2(R_n)\ge 1$ for $n$ in each 
	of these residue classes, so $R_n$ is even. Therefore, 
	$R_n-1$ is odd, which gives $\nu_2(R_n-1)=0$.
	
	\medskip
	\noindent\textbf{Case 2.} $n\equiv 0,3,7,13\pmod{14}$.
	
	We assert that $R_n\equiv 3\pmod{4}$ for all such $n$, which 
	implies that $R_{n}=4t+3$ for some integer $t\ge 0$, and so $\nu_2(R_n-1)=\nu_{2}(2(2t+1))=1$. 
	
	\smallskip
	\textit{Subcase} $n\equiv 0,7\pmod{14}$. Let $n=7k$ for some integer $k\ge 0$. 
	The base case is satisfied since $R_0=3\equiv 3\pmod{4}$. Suppose that $R_{7k}\equiv 3\pmod{4}$ for some fixed 
	$k\ge 0$. Applying Lemma~\ref{L1} with $r=7k+3$ and 
	$s=4$, and using the fact that $P_2=1$ and $P_3=P_4=2$, we obtain
	\begin{align*}
		R_{7(k+1)} = R_{(7k+3)+4} = P_3 R_{7k+2}+P_4 R_{7k+1}+P_2 R_{7k}= 2R_{7k+2}+2R_{7k+1}+R_{7k}.
	\end{align*}
	Since $7k+1\equiv 1\pmod{7}$ and $7k+2\equiv 2\pmod{7}$, 
	Lemma~\ref{L3} implies $\nu_2(R_{7k+1})\ge 1$ and 
	$\nu_2(R_{7k+2})\ge 1$, so both $R_{7k+1}$ and 
	$R_{7k+2}$ are even. Therefore, $2R_{7k+1}\equiv 2R_{7k+2} \equiv 0
	\pmod{4}$. Together 
	with the induction hypothesis, 
	we conclude
	\begin{equation*}
		R_{7(k+1)}\equiv 3\pmod{4},
	\end{equation*}
	completing the induction.
	
	\smallskip
	\textit{Subcase} $n\equiv 3 \pmod {14}$.
	Let $n=14k+3$ for some integer $k\ge 0$. By the congruence \eqref{eq:alpha}, we have 
	$$
	\alpha^{14k+3}=(\alpha^{14} )^k \alpha^3=[1+2^2(3\alpha^2+4\alpha+2)]^k \alpha^3\equiv \alpha^3\pmod {4}.
	$$
	If we apply the above by substituting $\alpha$ with $\beta$ and $\gamma$ and using Lemma \ref{eq:binet_perrin}, we obtain
	$$
	R_{14k+3}-1=\alpha^{14k+3}+\beta^{14k+3}+\gamma^{14k+3}-1\equiv \alpha^3+\beta^3+\gamma^3-1\equiv R_3-1\equiv 2\pmod 4,
	$$
	which shows that $R_{n}\equiv 3\pmod{4}$ in this case. 
	
	\smallskip
	\textit{Subcase} $n\equiv 13\pmod{14}$. Let $n=14k+13$ for some integer $k\ge 0$. 
	The base case holds since $R_{13}=39\equiv 3\pmod{4}$. 
	Suppose that $R_{14k+13}\equiv 3\pmod{4}$ for some fixed $k\ge 0$. 
	Applying Lemma~\ref{L1} with $r=14k+14$ and $s=13$, 
	and using the fact that $P_{11}=16$, $P_{12}=21$, and $P_{13}=28$, 
	we conclude
	\begin{align*}
		R_{14(k+1)+13} = R_{(14k+14)+13}&= P_{12}R_{14k+13}+P_{13}R_{14k+12}+P_{11}R_{14k+11}\\
		&= 21R_{14k+13}+28R_{14k+12}+16R_{14k+11}\\
		&\equiv R_{14k+13}\pmod{4},
	\end{align*}
	completing the induction.
	
	\medskip
	\noindent\textbf{Case 3.} $n\equiv 6\pmod{14}$.
	
	We prove by induction that $R_n\equiv 5\pmod{8}$, which gives $R_{n}=8t+5$ for some integer $t\ge 0$, and so 
	$\nu_2(R_n-1)=\nu_2(4(2t+1))=2$. Let $n=14k+6$ for some integer $k\ge 0$. 
	The base case is satisfied since $R_6=5\equiv 5\pmod{8}$. Suppose that $R_{14k+6}\equiv 5\pmod{8}$ for some fixed $k\ge 0$. Applying 
	Lemma~\ref{L1} with $r=14k+7$, $s=13$, and using the fact that $P_{11}=16$, 
	$P_{12}=21$, and $P_{13}=28$, we obtain
	\begin{align*}
		R_{14(k+1)+6} = R_{(14k+7)+13}
		&= P_{12}R_{14k+6}+P_{13}R_{14k+5}
		+P_{11}R_{14k+4}\\
		&= 21R_{14k+6}+28R_{14k+5}+16R_{14k+4}\\
		&\equiv 1+4R_{14k+5}\pmod{8}.
	\end{align*}
    Now $14k+5\equiv 5\pmod{7}$, so 
	$\nu_2(R_{14k+5})=0$ by Lemma~\ref{L3}, meaning 
	$R_{14k+5}=2q+1$ for some integer $q\ge 0$. Thus,
	$4R_{14k+5}=8q+4\equiv 4\pmod{8}$.
	Therefore,
	$$
	R_{14(k+1)+6}\equiv 5\pmod{8}.
	$$
	
	\medskip
	\noindent\textbf{Case 4.} $n\equiv 12\pmod{14}$.
	
	Let $n=14s+12$ for some integer $s\ge 0$. Then, $n+2=7\cdot 2(s+1)$. If $s\ge 1$, then, by the fundamental theorem of arithmetic, we have that $s+1$ can be written uniquely as $2^{t-1}k$ for some integer $t\ge 1$ and some odd $k$. Therefore, every $n$ of this kind 
	can be written uniquely as $n=7\cdot 2^t k-2$ for some $t\ge 1$ and some odd $k$.
	Since $n\equiv -2\pmod{7}$, we apply 
	Lemma~\ref{L5} with $j=-2$ to obtain
	\begin{equation*}
		R_{7\cdot 2^t k-2}\equiv R_{-2}+k\cdot 2^{t+1}
		\pmod{2^{t+2}}.
	\end{equation*}
	Since $R_{-2}=\alpha^{-2}+\beta^{-2}+\gamma^{-2}=1$ by Lemma \ref{eq:binet_perrin}, we have
	\begin{equation*}
		R_n-1\equiv k\cdot 2^{t+1}\pmod{2^{t+2}}.
	\end{equation*}
	Since $k$ is odd, $\nu_2(R_n-1)=t+1$. Since 
	$n+2=7\cdot 2^t k$ with $k$ odd, $\nu_2(n+2)=t$, 
	which leads to $\nu_2(R_n-1)=t+1=\nu_2(n+2)+1$.
	
	\medskip
	\noindent\textbf{Exceptional cases.} 
	$n\equiv 5,10\pmod{14}$.
	
For these two residual classes, the $2$-adic valuation of $R_n-1$ does not admit a uniform closed-form expression. We determine the constant $k_2$ by applying the same greedy algorithm as in the exceptional case of Proposition~\ref{prop:Pn}, now with $U_n=R_n$, starting from each base value $n_0\in\{5,10\}$ in turn, and running until the index exceeds $2.2\times 10^{15}$. In each case, $R_{7\cdot 2^k}\pmod{2^{100}}$ and $R_{-7\cdot 2^k}\pmod{2^{100}}$ are precomputed using Lemma~\ref{lem:doubling}, and the index shift uses Lemma~\ref{lem:shift} with $U_n=R_n$. An implementation in SageMath yields $k_2=\nu_{2}(R_{1085137218429451}-1)=49$ when $n\equiv 5\pmod{14}$ and $k_{2}=\nu_{2}(R_{683114118940578}-1)=51$ when $n\equiv 10\pmod{14}$. 

This ends the proof of Proposition \ref{prop:Rn}.
\end{proof}

\begin{remark}\label{rem:2adic-k-perrin}
Similarly, the algorithm generates explicit approximations of the non-rational $2$-adic zeros of $\{R_n-1\}_{n}$ in the exceptional classes. 
	
For $n\equiv 5\pmod{14}$, writing $n_0=5+7\cdot 2k$ with $k\in\mathbb{Z}_2$, the algorithm yields
	\begin{equation*}
		k \equiv 53607125039940024673042731045
		\pmod{2^{100}},
	\end{equation*}
with $2$-adic expansion beginning 
	$(1010010000001001001101000001010\ldots)_2$,
and the corresponding approximation to the $2$-adic zero satisfies
	\begin{equation*}
		n_0 \equiv 750499750559160345422598234635
		\pmod{7\cdot 2^{100}}.
	\end{equation*}
	
For $n\equiv 10\pmod{14}$, writing $n_0=10+7\cdot 2k$ with $k\in\mathbb{Z}_2$, the algorithm yields
	\begin{equation*}
		k \equiv 22778962594365194611691311828
		\pmod{2^{100}},
\end{equation*}
with $2$-adic expansion beginning $(0010101101001010111000001010110\ldots)_2$, and the corresponding approximation of the $2$-adic zero satisfies
	\begin{equation*}
		n_0 \equiv 318905476321112724563678365602
		\pmod{7\cdot 2^{ 100}}.
	\end{equation*}
In each case, it is computationally verified that $R_{n_0}-1\equiv 0\pmod{2^{99}}$, which confirms that the algorithm has converged to within $99$ $2$-adic digits of a genuine zero of $\{R_n-1\}_{n}$.
\end{remark}

\begin{remark}\label{rem:2adic-zeros}
The formulas in Propositions \ref{prop:Pn} and \ref{prop:Rn} have a natural explanation via the theory of 2-adic zeros of linear recurrence sequences developed in \cite{BiluLuca2025}. By \cite[Theorem 1.1]{BiluLuca2025}, if $\{U_n\}_{n\in\mathbb{Z}}$ is a linear recurrence sequence with a genuine integer zero at $n=a$, then $\nu_2(U_n)$ grows like $\nu_2(n-a)$ in a 2-adic neighborhood of $a$.
	
For $U_n = P_n+1$: the genuine integer zeros are $n=-6$ and $n=-9$, as stated in \cite[Theorem~2.3]{Bravo2023}. These integers are the only ones with $P_n=-1$, which explains the terms $\nu_2(n+6)+4$ and $\nu_2(n+9)+1$ in Proposition \ref{prop:Pn}.
	
For $U_n = R_n-1$: the only genuine integer zero is $n=-2$, as stated in \cite[Theorem~2.2]{Bravo2023}. This integer is the only one with $R_{n}=1$, which explains the term $\nu_2(n+2)+1$ in Proposition \ref{prop:Rn}.
	
The exceptional residue class $n\equiv 50\pmod{112}$ in Proposition \ref{prop:Pn} and the classes $n\equiv 5,10\pmod{14}$ in Proposition \ref{prop:Rn} correspond to \emph{non-rational} 2-adic zeros of $\{P_n+1\}_{n\in \mathbb{Z}}$ and $\{R_n-1\}_{n\in \mathbb{Z}}$, respectively. Such zeros exist according to general theory but they are not rational numbers; approximations to their 2-adic digits can be computed algorithmically, as described in Remarks \ref{rem:2adic-k} and \ref{rem:2adic-k-perrin}.
\end{remark}

For greater thoroughness, the complementary sequences studied by Bravo and Irmak \cite{BrIrmak} admit a similar analysis. The shifted Padovan sequence $\{P_n-1\}_{n\in \mathbb{Z}}$ has zeros at $n=-26,-13,-12,-7,-5,-2,0,1,2$; while the shifted Perrin sequence $\{R_n+1\}_{n\in \mathbb{Z}}$ has zeros at $n=-29,-11,-7,-1$.

\subsection{Linear forms in three logarithms}

Our problems can be reduced to linear forms in three logarithms of algebraic numbers with very small absolute value (exponentially small in the coefficients of the linear form). Therefore, lower bounds for these linear forms exceeding upper bounds, and in which all the constants involved are explicit, reduce our problems to a finite amount of computation. We will use the special case of three logarithms in Matveev's theorem. Later, we present a slight simplification of Theorem 2.1 from \cite{M} applied with $n=3$, which can be found in \cite{MV}. First, we define the absolute logarithm Weil height of an algebraic number.

\begin{definition}\label{def}
	Let $ \eta $ be an algebraic number of degree $d$ over $\mathbb{Q}$ with minimal primitive polynomial 
	$$ a_{d}X^{d}+a_{d-1}X^{d-1}+\cdots +a_{0}=a_{d}(X-\eta^{(1)})\cdots (X-\eta^{(d)})\in \mathbb{Z}\left[X\right]. $$ 
	The absolute logarithmic Weil height of $\eta$ is given by 
	$$h(\eta)= \dfrac{1}{d}\Bigg(\log (|a_d|) + \sum_{j=1}^{d}\max \big\{\log \big|\eta^{(j)}\big|,0\big\}\Bigg).$$
\end{definition}

\begin{theorem}\label{Matveev}
Let $\alpha_{1}$, $\alpha_{2}$ and $\alpha_{3}$ be three distinct non-zero algebraic numbers, let $\log \alpha_{1}$, $\log \alpha_{2}$ and $\log \alpha_{3}$ be $\mathbb{Q}-$linearly independent logarithms of these algebraic numbers and let $b_{1}$, $b_{2}$ and $b_{3}$ be rational integers with $b_1\neq 0$. Put
$$
\Lambda=b_{1}\log \alpha_{1}+b_{2}\log \alpha_{2}+b_{3}\log \alpha_{3}.
$$
Let
$$
D=[\mathbb{Q}(\alpha_1,\alpha_2,\alpha_3):\mathbb{Q}]\quad \text{and}\quad \chi =[\mathbb{R}(\alpha_{1},\alpha_{2},\alpha_{3}):\mathbb{R}].
$$
Let $A_1$, $A_2$ and $A_3$ be positive real numbers, which satisfy
$$
A_{j}\ge \max \{D\,h(\alpha_{j}),|\log \alpha_{j}|\}\quad (1\le j\le 3),
$$
where $h$ is the absolute logarithmic Weil height.

Assume that
$$
B\ge \max \{|b_{j}|A_{j}/A_{1}:1\le j\le 3\}.
$$
Also define
$$
\kappa_1:=\frac{5\cdot 16^{5}}{6\chi}e^{3}(7+2\chi)\left(\frac{3e}{2}\right)^{\chi}\left(26.25+\log (D^{2}\log (eD))\right).
$$
If $\Lambda \neq 0$, then
$$
\log |\Lambda|>-\kappa_{1}D^{2}A_{1}A_{2}A_{3}\log (1.5eDB\log (eD)).
$$
\end{theorem}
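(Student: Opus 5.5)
This is Matveev's theorem for $n=3$, so I do not plan to prove it from scratch. I will derive it by specializing \cite[Theorem~2.1]{M}, in the form recorded in \cite{MV}, to the case of three logarithms. In Matveev's setting one has $n$ non-zero algebraic numbers $\alpha_1,\dots,\alpha_n$ in a number field of degree $D$, with $\chi=1$ in the real case and $\chi=2$ otherwise. The logarithms are $\mathbb{Q}$-linearly independent and the integer coefficients satisfy $b_1\ne 0$ (in the normalised form). The heights $A_j\ge\max\{Dh(\alpha_j),|\log\alpha_j|\}$ are given together with a parameter $B\ge\max\{|b_j|A_j/A_1\}$. The theorem then gives a lower bound
$$
\log|\Lambda|>-C(n,\chi)\,D^{2}A_1\cdots A_n\log\bigl(1.5\,eDB\log(eD)\bigr),
$$
with $C(n,\chi)$ an explicit product of powers of $16$, $e$, $n$ and $n!$, times a factor $\log\bigl(e^{4.4n+7}n^{5.5}D^{2}\log(eD)\bigr)$.

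The first step is to check that the hypotheses stated above (distinctness, linear independence, $b_1\ne0$, the choice of $A_j$ and $B$) are exactly Matveev's hypotheses for $n=3$. In particular, I will check that the quantity $B$ with the normalisation $|b_j|A_j/A_1$ is the one in \cite{MV}. The point of this normalisation is that only $b_1$ needs to be non-zero and the $A_1$ in the denominator is what lets $B$ enter the logarithm in the stated form.

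The second step is to substitute $n=3$ into $C(n,\chi)$ and simplify. The logarithmic factor becomes
$$
\log\bigl(e^{20.2}\,3^{5.5}D^{2}\log(eD)\bigr)=20.2+5.5\log 3+\log\bigl(D^{2}\log(eD)\bigr),
$$
and since $5.5\log3\approx6.042$, this is at most $26.25+\log\bigl(D^{2}\log(eD)\bigr)$. The remaining factors combine as follows: the $16^n/n!$ part becomes $16^3/6$, the term $(2n+1+2\chi)$ becomes $(7+2\chi)$, and the term $(en/2)^\chi$ becomes $(3e/2)^\chi$. The leftover pure powers of $16$ and the factor $(n+2)=5$ should collapse to the numerical factor $5\cdot16^{5}/(6\chi)\,e^{3}$. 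This gives $\kappa_1$ as displayed, with the convention that any rounding is done upward, so the inequality is only weakened.

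The only real obstacle is bookkeeping. Matveev's paper states several variants (real versus complex, a general $\Lambda$ versus the normalised one with $b_1\ne0$), and their exponents of $16$ and the $(n+1)$-powers differ. I will therefore follow the exact version quoted in \cite{MV} and confirm that its $n=3$ specialization reproduces $\kappa_1$ term by term. If a discrepancy appears, I will replace $\kappa_1$ by the larger of the two constants, which keeps the bound valid for the later applications.
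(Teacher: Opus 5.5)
Your plan is the same as the paper's, which gives no proof and simply quotes Matveev's Theorem~2.1 for $n=3$ (in the form of \cite{MV}), relabelled so that $b_1\neq 0$; your check $20.2+5.5\log 3\approx 26.242<26.25$ is exactly the ``slight simplification'' involved. One bookkeeping correction: Matveev's constant is $C(n,\chi)=\frac{16}{n!\,\chi}e^{n}(2n+1+2\chi)(n+2)(4n+4)^{n+1}(en/2)^{\chi}$, so $16^5$ comes from $16\cdot(4\cdot 3+4)^{4}$ rather than from a $16^n/n!$ factor, the $n=3$ case reproduces $\kappa_1$ exactly, and no enlargement of $\kappa_1$ is needed.
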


All computational verifications in this work were performed using SageMath 10.6.

\section{Proof of Theorem \ref{thm:1}}

In this section, we solve the Diophantine equation
\begin{equation}\label{main:eqn1}
P_n=W_m
\end{equation}
in integers $n\ge 0$ and $m\ge 1$. If $m\le 3$, then $P_{0}=P_{1}=P_{2}=1=W_{1}$ and $P_{8}=7=W_{2}$ are the only Woodall numbers in the Padovan sequence. Suppose that $(m,n)$ is a solution of equation \eqref{main:eqn1} with $m\ge 4$. Then $P_{n}=W_{m}\ge 4\cdot 2^{4}-1=63$. This implies that $n\ge 16$ since $P_{15}=49<63<65=P_{16}$.

On the other hand, we know that $4^{2}\le 2^{4}$. Suppose that $k^{2}\le 2^{k}$ for some integer $k\ge 4$. Then
$$
\frac{(k+1)^{2}}{k^{2}}=\left(1+\frac{1}{k}\right)^{2}\le \left(1+\frac{1}{4}\right)^{2}=1.5625<2.
$$
Thus, $(k+1)^{2}<2k^{2}\le 2\cdot 2^{k}=2^{k+1}$. By mathematical induction, we can conclude that $m^{2}\le 2^{m}$ for all $m\ge 4$. It follows that $W_{m}+1\le 2^{3m/2}$ for all $m\ge 4$. Furthermore, $P_{n}+1>\alpha^{n-2}$ for all $n\ge 0$ by Lemma \ref{lem:growth_P}. Therefore, from equation \eqref{main:eqn1} we obtain $\alpha^{n-2}<2^{3m/2}$. Hence,
\begin{equation}\label{E6}
n<4m+2.
\end{equation}

\subsection{Absolute upper bound for $m$}
From the above, it suffices to find an absolute upper bound for $m$ to show that equation \eqref{main:eqn1} has a finite number of solutions. Our result is as follows.

\begin{lemma}\label{lem:P1}
	If equation \eqref{main:eqn1} has a solution with $m\ge 4$ and $n\ge 16$, then
	\begin{align*}
		m <  5.6\times 10^{14}.
	\end{align*}
\end{lemma}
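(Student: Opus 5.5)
The plan combines the $2$-adic information of Proposition \ref{prop:Pn} with a single application of Theorem \ref{Matveev}. Rewrite \eqref{main:eqn1} as $P_n+1=m\cdot 2^m$. Then
$$
m\le \nu_2(m2^m)=\nu_2(P_n+1).
$$
Proposition \ref{prop:Pn} bounds the right-hand side in every residue class. The bound is $0,1,3,5$ in the bounded classes, and $\nu_2(n+9)+1$ or $\nu_2(n+6)+4$ in the two unbounded classes. In those two classes $\nu_2(n+a)\le \log(n+9)/\log 2$, so $m\le \log(4m+11)/\log 2+4$ by \eqref{E6}, which forces $m$ to be tiny. The only class where this argument is unavailable is $n\equiv 50\pmod{112}$. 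There the proposition only gives $m\le 56$, and only under the hypothesis $n<2.3\times 10^{15}$. Using the proposition that way would be circular, so an independent analytic bound is needed; this is exactly what the lemma supplies. Note that $4\cdot 5.6\times10^{14}+2<2.3\times10^{15}$, consistent with \eqref{E6}.

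For the analytic bound I would use Lemma \ref{eq:binet_padovan}. Since $|\beta|=|\gamma|=\alpha^{-1/2}<1$, we get $|P_n-c_\alpha\alpha^n|<1$ for $n\ge 16$ (a crude bound on $|c_\beta|$ suffices). Hence
$$
|c_\alpha\alpha^n-m2^m|<2,
$$
and dividing by $m2^m$ gives
$$
\left|c_\alpha\alpha^n m^{-1}2^{-m}-1\right|<\frac{2}{m2^m}.
$$
Set $\Lambda=n\log\alpha-m\log 2+\log(c_\alpha/m)$, so that $|\Lambda|<4/2^m$. The number $c_\alpha\alpha^n/m$ is irrational: otherwise its Galois conjugate $c_\beta\beta^n/m$ would equal it, but the latter has absolute value less than $1$. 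Therefore $\Lambda\ne0$.

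Apply Theorem \ref{Matveev} with $\alpha_1=\alpha$, $\alpha_2=2$, $\alpha_3=c_\alpha/m$, and $D=3$, $\chi=1$. The heights are $h(\alpha)=(\log\alpha)/3$ and $h(2)=\log 2$. For $c_\alpha$, its minimal polynomial is $23X^3-23X^2+6X-1$, so $h(c_\alpha)=(\log 23)/3$ and $h(c_\alpha/m)\le (\log 23)/3+\log m$. Thus $A_1=\log\alpha$, $A_2=3\log2$, $A_3=\log 23+3\log m$, and $B=4m+2$ suffices. The theorem yields
$$
m\log2-\log4<\kappa_1\cdot 9\cdot 0.29\cdot 2.08\,(3.2+3\log m)\log(1.5e\cdot 3(4m+2)\log(3e)),
$$
a bound of the shape $m<C(\log m)^2$ with $C$ of size about $10^{12}$. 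Solving it gives an absolute bound on $m$ of order $10^{14}$–$10^{15}$.

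The main obstacle is the height of $\alpha_3$: it contains $\log m$, which produces the $(\log m)^2$ and determines whether one lands below the stated $5.6\times10^{14}$. I would first evaluate carefully, with $\kappa_1\approx 5.6\cdot10^{11}$ (for $\chi=1$, $D=3$), and check numerically whether the stated constant comes out. If it does not, the alternative is to keep the full form $P_n+1=m2^m$ but first bound $n$ in terms of $m$ more sharply, or to absorb $m$ through $\alpha_3=c_\alpha/m$ with the refined estimate $h(c_\alpha/m)\le h(c_\alpha)+\log m$ and $\log m<\log(5.6\cdot10^{14})$ iteratively.
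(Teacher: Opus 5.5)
Your overall strategy matches the paper's: the Binet formula, the linear form in $\log 2$, $\log(m/c_\alpha)$ and $\log\alpha$, non-vanishing via the conjugate $c_\beta\beta^n$, then Theorem~\ref{Matveev}. But as you set it up, it does not deliver the stated constant. The exact constant matters, because the next subsection needs $n<4m+2<2.3\times 10^{15}$ in order to invoke Proposition~\ref{prop:Pn}.

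\textbf{The gap: ordering of the logarithms.} In the version of Matveev's theorem used here, $B$ must satisfy $B\ge \max_j |b_j|A_j/A_1$. With your choice $\alpha_1=\alpha$, $A_1=\log\alpha\approx 0.281$, this forces
$$
B\ge m\cdot 3\log 2/\log\alpha\approx 7.4\,m,
$$
so $B=4m+2$ is not admissible.

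\textbf{The constant $\kappa_1$ is misestimated.} For $D=3$, $\chi=1$ one gets $\kappa_1\approx 1.88\times 10^{10}$, not $5.6\cdot 10^{11}$. With your value the bound would be of order $2\times 10^{16}$.

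\textbf{Resulting bound.} With the correct $\kappa_1$ and an admissible $B\approx 7.4m$, the inequality
$$
m<27\kappa_1(\log\alpha)(3\log m+\log 23)\log(4.5e\,B\log(3e))
$$
only gives $m\approx 5.9\times 10^{14}$. Even your inadmissible $B=4m+2$ gives about $5.8\times 10^{14}$, or about $6.0\times 10^{14}$ with your rounded $0.29$ and $3.2$. All of these exceed $5.6\times 10^{14}$, and then $4m+2$ exceeds $2.3\times 10^{15}$.

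\textbf{The fix.} Order the logarithms as the paper does: $\alpha_1=2$, $b_1=m$, $A_1=3\log 2$. Then $(3\log m+\log 23)/(3\log 2)<m$ for $m\ge 4$. By \eqref{E6}, also $n\log\alpha/(3\log 2)<0.14(4m+2)<m$. Hence $B=m$ is admissible. This removes the extra factor inside the logarithm and yields
$$
m<27\kappa_1(\log\alpha)(3\log m+\log 23)\log(4.5em\log(3e)),
$$
whose root is $m\approx 5.575\times 10^{14}<5.6\times 10^{14}$.

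Your fallback suggestions would not repair this. The loss comes from $B$ through the choice of $\alpha_1$, not from the bound on $n$ or from $h(c_\alpha/m)$. With $\alpha_1=\alpha$, the binding term in $B$ is $m\cdot 3\log 2/\log\alpha$, which a sharper bound on $n$ does not touch. Finally, your first paragraph, the $2$-adic discussion, belongs to the subsequent reduction step and is not needed for this lemma.
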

\begin{proof}
Using Lemma \ref{eq:binet_padovan}, we can rewrite \eqref{main:eqn1} as
$$
c_{\alpha}\alpha^{n}-m\cdot 2^{m}=-1-(c_{\beta}\beta^{n}+c_{\gamma}\gamma^{n}).
$$
Dividing the above equation by $m\cdot 2^{m}$ and taking absolute value we get
\begin{align*}
\left|2^{-m}\cdot \left(\frac{m}{c_{\alpha}}\right)^{-1}\cdot \alpha^{n}-1\right|=\left|\frac{-1-(c_{\beta}\beta^{n}+c_{\gamma}\gamma^{n})}{m\cdot 2^{m}}\right|< \frac{1+0.5\cdot 0.87^{16}}{4\cdot 2^{m}}<\frac{0.3}{2^{m}},
\end{align*}
where we used the fact that $m\ge 4$, $|c_{\beta}|=|c_{\gamma}|<0.25$, $|\beta|=|\gamma|<0.87$, and $n\ge 16$. Since $0.3/2^{m}<1/2$ for all $m\ge 1$, we have that
\begin{equation*}
|m\log 2+\log (m/c_{\alpha})-n\log \alpha |<2^{-m}.
\end{equation*}
Thus
\begin{equation}\label{E5}
\log |m\log 2+\log (m/c_{\alpha})-n\log \alpha|<-m\log 2.
\end{equation}
Put $\alpha_{1}=2$, $\alpha_{2}=m/c_{\alpha}$, $\alpha_{3}=\alpha$, $b_{1}=m$, $b_{2}=1$, and $b_{3}=-n$. Then 
$$
D=[\mathbb{Q}(2,m/c_{\alpha},\alpha):\mathbb{Q}]=[\mathbb{Q}(\alpha):\mathbb{Q}]=3\qquad {\text{\rm  and}}\qquad \chi =[\mathbb{R}(2,m/c_{\alpha},\alpha):\mathbb{R}]=[\mathbb{R}:\mathbb{R}]=1.
$$ 
On the other hand, it follows from Definition \ref{def} that $h(2)=\log 2$, $h(m)=\log m$, $h(c_{\alpha})=(\log 23)/3$ and $h(\alpha)=(\log \alpha)/3$, since their minimal primitive polynomials are $X-2$, $X-m$, $23X^3 - 23X^2 + 6X -1$ and $\phi(X)$, respectively. Since $h(m/c_{\alpha})\le h(m)+h(c_{\alpha})$, it follows that the real numbers 
$$
A_{1}:=3\log 2,\quad A_{2}:=3\log m +\log 23,\quad {\text{\rm and}} \quad A_{3}:=\log \alpha
$$ 
satisfy 
$$
A_{1}\ge \max \{3h(2),|\log 2|\},\quad A_{2}\ge \max \{3h(m/c_{\alpha}),|\log (m/c_{\alpha})|\},\quad {\text{\rm  and}}\quad A_{3}\ge \max \{3h(\alpha),|\log \alpha|\}.
$$
To choose $B$, note that it follows from \eqref{E6} that
$$
\max \{m,(3\log m +\log 23)/3\log 2,n(\log \alpha/3\log 2)\}=m.
$$
Therefore, we choose $B=m$. We have
$$
\kappa_1=\frac{5\cdot 16^{5}}{6}e^{3}(7+2)\left(\frac{3e}{2}\right)\left(26.25+\log (9\log (3e))\right).
$$
Observe that $\Lambda :=m\log 2+\log (m/c_{\alpha})-n\log \alpha \neq 0$. To see this, we consider the $\mathbb{Q}$-automorphism $\sigma$ of the Galois extension $\mathbb{K}:=\mathbb{Q}(\alpha,\beta)$ over $\mathbb{Q}$ defined by $\sigma(\alpha)=\beta$, $\sigma(\beta)=\alpha$ and $\sigma(\gamma)=\gamma$. If $\Lambda =0$ then $\sigma(\Lambda)=0$ and we get
$$
c_{\beta}\beta^{n}=\sigma(c_{\alpha} \alpha^{n})=m\cdot 2^{m}.
$$
Thus,
$$
1=1\cdot 1^{n}>|c_{\beta}||\beta|^{n}=m\cdot 2^{m}\ge 4\cdot 2^{4}=64.
$$
Hence, $\Lambda \neq 0$. By Theorem \ref{Matveev}, we conclude that
\begin{equation}\label{E3}
\log |\Lambda|>-27\kappa_{1}(\log 2)(3\log m+\log 23)(\log \alpha)\log (4.5em\log (3e)).
\end{equation}
Then from \eqref{E5} and \eqref{E3} we get that
\begin{equation}\label{E2}
m<27\kappa_{1}(\log \alpha)(3\log m+\log 23)\log (4.5em\log (3e)).
\end{equation}
Let
$$
f(m)=m-27\kappa_{1}(\log \alpha)(3\log m+\log 23)\log (4.5em\log (3e)).
$$
We want to find where $f(m)=0$. Note that
$$
f^{\prime}(m)=1-\frac{27\kappa_{1}(\log \alpha) [3\log m+3\log (4.5em\log (3e)) + \log 23]}{m},
$$
so the Newton-Raphson iteration is 
$$
m_{k+1}=m_{k}-\frac{f(m_{k})}{f^{\prime}(m_{k})}\quad (k\ge 0).
$$
An initial value of $m_{0}=10^{15}$ was chosen. The first four iterations are shown below.
\begin{table}[H]
\centering
\begin{tabular}{|c|c|c|}
\hline
$k$ & $m_{k}$ & $f(m_k)$ \\ \hline
0 & $10^{15}$ & $4.24287967246031 \times 10^{14}$ \\ \hline
1 & $5.61950326138484 \times 10^{14}$ & $4.20233296312069 \times 10^{12}$ \\ \hline
2 & $5.57503287195068 \times 10^{14}$ & $9.46359543562500 \times 10^{8}$ \\ \hline
3 & $5.57502285275635 \times 10^{14}$ & $48.625$ \\ \hline
4 & $5.57502285275583 \times 10^{14}$ & $\approx 0$ \\ \hline
\end{tabular}
\caption{Convergence of the Newton-Raphson method for $m_{0}=10^{15}$.}
\end{table}
Therefore, \eqref{E2} holds if 
$$
m<5.6\times 10^{14}.
$$
\end{proof}

\subsection{Lowering the upper bound of $m$}

We now use Proposition \ref{prop:Pn} to reduce the previous absolute upper bound for $m$ to a constant small enough to allow a direct computational search for the solutions of equation \eqref{main:eqn1}. The key observation is as follows. By applying the $2$-adic 
valuation to equation \eqref{main:eqn1}, we obtain
\begin{align*}
	\nu_2(P_n + 1) = \nu_2(m\cdot 2^m) 
	= m + \nu_2(m) \ge m,
\end{align*}
so that
\begin{equation}\label{eq:m-bound-nu}
	m \le \nu_2(P_n+1).
\end{equation}
We divide the argument into two cases, depending on whether 
$n$ belongs to the class of exceptional residues from 
Proposition~\ref{prop:Pn} or not.

\medskip
\noindent\textbf{(i) Non-exceptional cases:} 
$n\not\equiv 50\pmod{112}$.

By Proposition~\ref{prop:Pn}, the $2$-adic valuation 
$\nu_2(P_n+1)$ is explicitly given in terms of $n$. 
In all non-exceptional cases, the largest possible 
value is either a fixed constant or of the form 
$\nu_2(n+\ell)+c$ for small constants $\ell$ and $c$. 
Since $\nu_2(q) \le \log q/\log 2$ for any positive 
integer $q$, we obtain in all non-exceptional cases
that
\begin{equation}\label{eq:nu-log}
	\nu_2(P_n+1) \le 
	\max\left\{5,\, \nu_2(n+9)+1,\, 
	\nu_2(n+6)+4\right\} 
	\le \frac{\log(n+6)}{\log 2}+4.
\end{equation}
Combining \eqref{eq:m-bound-nu} and \eqref{eq:nu-log} 
with \eqref{E6}, 
we obtain
\begin{equation*}
	m < \frac{\log(4m+8)}{\log 2}+4.
\end{equation*}
Since the right-hand side is concave down as a
function of real $m>1$, a 
routine calculation shows that this inequality implies that
\begin{equation*}
	m \le 9.
\end{equation*}

\medskip
\noindent\textbf{(ii) Exceptional case:} 
$n\equiv 50\pmod{112}$.

By \eqref{E6} and Lemma~\ref{lem:P1}, we have that $n <4(5.6\times 10^{14})+2< 2.3\times 10^{15}$. 
Therefore, Proposition~\ref{prop:Pn} applies and yields 
$\nu_2(P_n+1)\le 56$. Combining this with \eqref{eq:m-bound-nu}, we obtain
\begin{equation*}
	m \le 56.
\end{equation*}

Therefore, in all cases, $m \le 56$. Then, $P_{n}\le W_{56}$. Hence, $n\le 153$ since $W_{56}\in (P_{153},P_{154})$. Finally, we write a simple code in SageMath to determine all solutions $(m,n)$ of equation \eqref{main:eqn1} with $m\in [4,56]$ and $n\in [16,153]$. No solution were found. This concludes the proof of Theorem \ref{thm:1}.

\section{Proof of Theorem \ref{thm:2}}

This proof follows the same general strategy as the proof of Theorem \ref{thm:1}. Now, we solve the Diophantine equation
\begin{equation}\label{main:eqn2}
R_n= C_m
\end{equation}
in integers $m\ge 1$ and $n\ge 0$. For $m\le 3$, the only Cullen number in the Perrin sequence is 
$$
R_{0}=R_{3}=3=C_1.
$$ 
Suppose that $(m,n)$ is a solution of equation \eqref{main:eqn2} with $m\ge 4$. Then, $R_n=C_{m}\ge C_{4}=65$. This implies that $n\ge 15$ since $65\in (R_{14},R_{15})$. 

On the other hand, given Lemma \ref{lem:growth_R} and the fact that $m\le 2^{m/2}$ for all $m\ge 4$, we obtain
\begin{equation}\label{eq:n-bound}
n<4m+1.
\end{equation}

\subsection{Absolute upper bound for $m$}
From \eqref{eq:n-bound}, it again suffices to impose an absolute bound on $m$ to conclude that equation \eqref{main:eqn2} has a finite number of solutions. Our result is as follows.

\begin{lemma}\label{lem:R1}
	If equation \eqref{main:eqn2} has a solution with $m\ge 4$ and $n\ge 15$, then
	\begin{align*}
		m <   5.4\times 10^{14}.
	\end{align*}
\end{lemma}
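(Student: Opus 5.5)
We follow the proof of Lemma \ref{lem:P1}, this time with the Perrin Binet formula of Lemma \ref{eq:binet_perrin}, in which all three coefficients equal $1$. We rewrite \eqref{main:eqn2} as
$$
\alpha^{n}-m\cdot 2^{m}=1-(\beta^{n}+\gamma^{n}),
$$
divide by $m\cdot 2^{m}$ and take absolute values. Using $|\beta|=|\gamma|<0.87$, $n\ge 15$ and $m\ge 4$, this gives
$$
\left|2^{-m}m^{-1}\alpha^{n}-1\right|<\frac{1+2\cdot 0.87^{15}}{64}<\frac{0.3}{2^{m}}.
$$
Since the right-hand side is less than $1/2$, the inequality $|\log(1+x)|<2|x|$ yields $|\Lambda|<2^{-m}$ (up to a harmless constant), where
$$
\Lambda:=n\log\alpha-m\log 2-\log m .
$$
This gives the upper bound $\log|\Lambda|<-m\log 2+O(1)$.

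Next we apply Theorem \ref{Matveev} with $\alpha_1=2$, $\alpha_2=m$, $\alpha_3=\alpha$, $b_1=m$, $b_2=1$ and $b_3=-n$, which is simpler than the Padovan case because there is no $c_\alpha$. Here $D=3$ and $\chi=1$. We may take
$$
A_1=3\log 2,\qquad A_2=3\log m,\qquad A_3=\log\alpha ,
$$
using $h(m)=\log m$ and $h(\alpha)=(\log\alpha)/3$. By \eqref{eq:n-bound}, $n\log\alpha/(3\log 2)<m$ and $\log m/\log 2<m$, so $B=m$ is admissible. The constant $\kappa_1$ is the same as in Lemma \ref{lem:P1}.

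We must check that $\Lambda\neq 0$. If $\Lambda=0$, then $\alpha^{n}=m\cdot 2^{m}$. Applying the automorphism $\sigma$ of $\mathbb{Q}(\alpha,\beta)$ that sends $\alpha$ to $\beta$ gives $\beta^{n}=m\cdot 2^{m}\ge 64$, which is impossible because $|\beta|<1$. Matveev's bound then reads
$$
\log|\Lambda|>-27\kappa_1(\log 2)(3\log m)(\log\alpha)\log(4.5em\log(3e)).
$$
Comparing with the upper bound and dividing by $\log 2$, we get
$$
m<81\kappa_1(\log\alpha)(\log m)\log(4.5em\log(3e))+O(1).
$$

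The last step solves this implicit inequality numerically by Newton–Raphson, as in Lemma \ref{lem:P1}, starting from $m_0=10^{15}$. Since $A_2$ no longer contains the term $\log 23$, the root should fall a little below the Padovan value $5.575\times10^{14}$, which is consistent with $5.4\times 10^{14}$. The only point needing care is confirming that the computed root, including the small additive constant from $\log 0.6$, stays below $5.4\times 10^{14}$. If it does not, we would sharpen $A_2$ or the error bound slightly until it does.
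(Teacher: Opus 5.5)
Your route is exactly the paper's: the same linear form $m\log 2+\log m-n\log\alpha$, the same data in Theorem~\ref{Matveev} ($D=3$, $\chi=1$, $A_1=3\log 2$, $A_2=3\log m$, $A_3=\log\alpha$, $B=m$), the same conjugation argument for $\Lambda\neq 0$, and the same Newton--Raphson finish.

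The one real gap is that the finish is never carried out. The lemma \emph{is} the numerical bound, and your justification (the root ``should fall a little below'' $5.575\times 10^{14}$, ``consistent with'' $5.4\times 10^{14}$) does not decide it, because the margin is very thin. Take $81\kappa_1\log\alpha\approx 4.282\times 10^{11}$. At $m=5.4\times 10^{14}$ one has $\log m\approx 33.923$ and $\log(4.5em\log(3e))\approx 37.168$, so the right-hand side of \eqref{E4} is $\approx 5.3988\times 10^{14}$. The root is therefore $\approx 5.39876\times 10^{14}$, as in the paper, only about $0.02\%$ below the target. A naive rescaling of the Padovan root by $3\log m/(3\log m+\log 23)\approx 0.970$ gives about $5.41\times 10^{14}$, which overshoots. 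So the claim genuinely rests on this evaluation. You also need the remark that $m-81\kappa_1(\log\alpha)(\log m)\log(4.5em\log(3e))$ is increasing for $m\ge 5.4\times 10^{14}$ (its derivative there is about $0.94$).

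Two smaller points:

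First, your displayed estimate should have $m\cdot 2^m$ in the denominator: $\frac{1+2\cdot 0.87^{15}}{m\cdot 2^m}\le \frac{1+2\cdot 0.87^{15}}{4\cdot 2^m}\approx \frac{0.312}{2^m}$. The paper's $0.3$ has the same tiny slip, which is harmless since any constant below $1/2$ works.

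Second, there is no additive $O(1)$ to track. Since $|\Lambda|<2\cdot 0.32\cdot 2^{-m}<2^{-m}$, you get $\log|\Lambda|<-m\log 2$ exactly. The constant you were worried about is negative and only helps, so you obtain precisely \eqref{E4}, and the fallback of sharpening $A_2$ is unnecessary.
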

\begin{proof}
Using Lemma \ref{eq:binet_perrin}, we can rewrite \eqref{main:eqn2} as
$$
2^{-m}\cdot m^{-1}\cdot \alpha^{n}-1=\frac{1-(\beta^{n}+\gamma^{n})}{m\cdot 2^{m}}.
$$
Taking the absolute value in the above equality and using the fact that $|\beta|=|\gamma|<0.87$, $m\ge 4$, and $n\ge 15$, we obtain
$$
|2^{-m}\cdot m^{-1}\cdot \alpha^{n}-1|<\frac{0.3}{2^{m}}.
$$ 
Since $0.3/2^{m}<1/2$ for $m\ge 1$, we obtain
\begin{equation*}
|m\log 2+\log m-n\log \alpha|<2^{-m}.
\end{equation*}
Therefore
\begin{equation}\label{E8}
\log |m\log 2+\log m-n\log \alpha|<-m\log 2.
\end{equation}
Now we use Theorem \ref{Matveev} with $\alpha_{1}=2$, $\alpha_{2}=b_{1}=m$, $\alpha_{3}=\alpha$, $b_{2}=1$, and $b_{3}=-n$. If 
$$
\Lambda :=m\log 2+\log m-n\log \alpha=0,
$$ 
then conjugating this relation by the automorphism $\sigma$ and taking the absolute value, we get $|\beta|^{n}=m\cdot 2^{m}$, which is not possible for $m\ge 1$ since $|\beta|<1$. Therefore, $\Lambda \neq 0$. 

In this application of Theorem \ref{Matveev}, we can take, just as in the previous one, 
$$
D=3,\quad \chi =1,\quad A_{1}=3\log 2,\quad {\text{\rm  and}}\quad A_{3}=\log \alpha.
$$ 
Therefore, $\kappa_{1}$ remains unchanged as well. This time, it suffices to set $A_{2}:=3\log m$ to ensure that $A_{2}\ge \max \{3h(m),|\log m|\}$. From \eqref{eq:n-bound} we can conclude that $\max \{m,\log m/\log 2,n(\log \alpha/3\log 2)\}=m$, so $B=m$ will suffice. Then
\begin{equation}\label{E9}
\log |m\log 2+\log m-n\log \alpha|>-81\kappa_{1}(\log 2)(\log m)(\log \alpha)\log (4.5em\log (3e)).
\end{equation}
From \eqref{E8} and \eqref{E9} we conclude that
\begin{equation}\label{E4}
m<81\kappa_{1}(\log \alpha)(\log m)\log (4.5em\log (3e)).
\end{equation}
When we apply Newton's method to the function $g(m):= m-81\kappa_{1}(\log \alpha)(\log m)\log (4.5em\log (3e))$ with $m_{0}=10^{14}$, we observe in the fifth iteration that $g(m)=0$ when $m\approx 5.3987565\times 10^{14}$, so \eqref{E4} holds if
$$
m<5.4\times 10^{14}.
$$
\end{proof}

\subsection{Lowering the upper bound of $m$}

Taking the $2$-adic valuation on both sides of \eqref{main:eqn2} written as $R_n - 1 = m\cdot 2^m$, we obtain
\begin{align*}
	\nu_2(R_n - 1) = \nu_2(m\cdot 2^m) 
	= m + \nu_2(m) \ge m,
\end{align*}
so that
\begin{equation}\label{E7}
m\le \nu_2(R_n - 1).
\end{equation}
We again split into two cases.

\medskip
\noindent\textbf{(i) Non-exceptional cases:} 
$n\not\equiv 5,10\pmod{14}$.

By Proposition~\ref{prop:Rn}, the largest possible 
value of $\nu_2(R_n-1)$ in non-exceptional cases is 
of the form $\nu_2(n+2)+1$. Since 
$\nu_2(q)\le \log q/\log 2$ for any positive integer 
$q$, we have
\begin{equation}\label{E1}
	\nu_2(R_n-1) \le \frac{\log(n+2)}{\log 2}+1.
\end{equation}
Combining \eqref{E7} and \eqref{E1} with \eqref{eq:n-bound}, we get
\begin{equation*}
	m<\frac{\log(4m+3)}{\log 2}+1.
\end{equation*}
Since the right-hand side is concave down as
a function of real $m>1$, a
routine computation shows that this inequality requires
$$
m\le 5.
$$

\medskip
\noindent\textbf{(ii) Exceptional cases:} 
$n\equiv 5,10\pmod{14}$.

By Lemma \ref{lem:R1} and \eqref{eq:n-bound} we have $n<2.2\times 10^{15}$, 
so Proposition~\ref{prop:Rn} applies and gives 
$\nu_2(R_n-1)\le 51$. Therefore, by \eqref{E7} we get
\begin{equation*}
	m \le 51.
\end{equation*}
We conclude, therefore, that $m\le 51$ in all cases. This implies that $R_{n}\le C_{51}$, which holds only if $n\le 139$, since $C_{51}\in (R_{139},R_{140})$. A search in SageMath reveals that equation \eqref{main:eqn2} has no solutions for $4\le m\le 51$ and $15\le n\le 139$. This completes the proof of Theorem \ref{thm:2}.

\section*{Acknowledgments} 
The first author thanks the Mathematics division of Stellenbosch University for funding his PhD studies. The third author was partially supported by the 2024 ERC Synergy Grant ``DynAMiCs".

\section*{Addresses}

$ ^{1} $ Mathematics Division, Stellenbosch University, Stellenbosch, South Africa.

Email: \url{hbatte91@gmail.com}

Email: \url{fluca@sun.ac.za}
\\
$ ^{2} $ Departamento de Matem\'aticas, Universidad del Cauca, Popay\'an, Colombia.

Email: \url{fbravo@unicauca.edu.co}
\\
$ ^{3} $ Max Planck Institute for Software Systems, Saarbr\"ucken, Germany.

\section*{Competing interests}
On behalf of all authors, the corresponding author states that there is no Conflict of interest.

\section*{Funding}
The first author was supported by a PhD scholarship from the Mathematics Division of Stellenbosch University.  The third author was partially supported by the 2024 ERC Synergy Grant ``DynAMiCs". 

\section*{Data Availability}
Data sharing is not applicable to this article as no datasets were generated or analyzed during the current study.
 \newpage
 \appendix
 \section{Appendices}
 \subsection{Py Code I}\label{app1}
 \begin{verbatim}
MOD = 2^100
R_mod = IntegerModRing(MOD)

# Step 1: Padovan companion matrix

M_pad = matrix(R_mod, [
  [0, 1, 0],
  [0, 0, 1],
  [1, 1, 0]
])
v0_pad = vector(R_mod, [1, 1, 1])

def padovan_mod_fast(n):
  """Compute P_n mod 2^100 for any n >= 0. Uses matrix exponentiation: O(log n) steps."""
  if n <= 2:
    return ZZ(1)
  vn = M_pad^n * v0_pad
  return ZZ(vn[0])

# Verify against known values
print("Verification of matrix exponentiation:")
print(f"P_0  = {padovan_mod_fast(0)}  (expected 1)")
print(f"P_8  = {padovan_mod_fast(8)}  (expected 7)")
print(f"P_22 = {padovan_mod_fast(22)} (expected 351)")
print(f"P_50 mod 2^100 = {padovan_mod_fast(50)}")
print(f"nu_2(P_50 + 1) = "
  f"{valuation(padovan_mod_fast(50) + 1, 2)}")
print()
# ------------------------------------------------------------
# Step 2: Precompute R_{7*2^k} and R_{-7*2^k} mod 2^100 using Lemma lem:doubling 

R_pos = {}  # R_{7*2^k} mod 2^100
R_neg = {}  # R_{-7*2^k} mod 2^100

R_pos[0] = Integer(7)
R_pos[1] = Integer(51)
R_pos[2] = Integer(2627)
R_neg[0] = Integer(-1)
R_neg[1] = Integer(-13)
R_neg[2] = Integer(67)

for k in range(2, 65):
  R_pos[k+1] = (R_pos[k]^2 - 2*R_neg[k]) % MOD
  R_neg[k+1] = (R_neg[k]^2 - 2*R_pos[k]) % MOD

print("R values precomputed.")
print()

# ------------------------------------------------------------
# Step 3: Run the greedy algorithm. At each step r, we try adding 7*2^r to n_current.
# We compute P_{n_candidate} mod 2^100 directly via matrix exponentiation.
LIMIT = 2.3 * 10^15
n_current     = 50
P_current     = padovan_mod_fast(50)
delta_current = valuation((P_current + 1) % MOD, 2)

print("="*60)
print(f"Starting greedy algorithm")
print(f"n_0 = {n_current}")
print(f"nu_2(P_50 + 1) = {delta_current}  (expected 9)")
print("="*60)

r = 5  # first step is 7*2^5 = 224
max_delta_seen = delta_current

while True:
  step  = 7 * 2^r
  n_candidate = n_current + step

  # Termination condition
  if n_candidate >= LIMIT:
    print(f"\nAlgorithm terminates at r={r}.")
    print(f"Next candidate would be n = {n_candidate}")
    print(f"which exceeds the limit 2.3*10^15.")
    break

  # Compute P_{n_candidate} mod 2^100
  P_candidate    = padovan_mod_fast(n_candidate)
  delta_candidate = valuation((P_candidate + 1) % MOD, 2)

  print(f"r={r}: n + 7*2^{r} = {n_candidate}")
  print(f"  nu_2(P_{n_candidate} + 1) = {delta_candidate}")

  if delta_candidate > delta_current:
    n_current     = n_candidate
    P_current     = P_candidate
    delta_current = delta_candidate
    max_delta_seen = max(max_delta_seen, delta_current)
    print(f"  --> ACCEPTED.")
    print(f"      New n     = {n_current}")
    print(f"      New nu_2  = {delta_current}")
    r += 1
  else:
    print(f"  --> SKIPPED (nu_2 did not increase).")
    r += 1 

# ------------------------------------------------------------
# Step 4: Report
print()
print("="*60)
print("FINAL RESULT")
print("="*60)
print(f"Optimal n            = {n_current}")
print(f"c_1 = nu_2(P_n + 1)  = {delta_current}")
print(f"c_4 = 5*c_1 + 2      = {4*delta_current + 2}")
print()
print("These values go into the paper as:")
print(f"  m <= c_1 = {delta_current}")
print(f"  n <= c_4 = {4*delta_current + 2}")
 \end{verbatim}
\end{document}